\documentclass[pdflatex,sn-mathphys-num]{sn-jnl}% Math and Physical Sciences Numbered Reference Style
\usepackage{graphicx}%
\usepackage{multirow}%
\usepackage{amsmath,amssymb,amsfonts}%
\usepackage{amsthm}%
\usepackage{mathrsfs}%
\usepackage[title]{appendix}%
\usepackage{xcolor}%
\usepackage{textcomp}%
\usepackage{manyfoot}%
\usepackage{booktabs}%
\usepackage{algorithm}%
\usepackage{algorithmicx}%
\usepackage{algpseudocode}%
\usepackage{listings}%
\theoremstyle{thmstyleone}%
\newtheorem{theorem}{Theorem}%  meant for continuous numbers
\newtheorem{proposition}[theorem]{Proposition}% 

\theoremstyle{thmstyletwo}%
\newtheorem{example}{Example}%
\newtheorem{remark}{Remark}%
\newtheorem{lemma}[theorem]{Lemma}
\theoremstyle{thmstylethree}%
\newtheorem{definition}{Definition}%
\newtheorem{corollary}[theorem]{Corollary}

\newcommand{\R}{\mathbb{R}}

\newcommand{\eps}{\varepsilon}
\newcommand{\norm}[1]{\left\|#1\right\|}
\newcommand{\diam}{\operatorname{diam}}
\newcommand{\dist}{\operatorname{dist}}

\newcommand{\JI}{J_{I}^{\eps}}
\newcommand{\JB}{J_{B}^{\eps}}
\newcommand{\CNJ}{C_{\rm NJ}}
\newcommand{\CZ}{C_Z}

\begin{document}

\title[Article Title]{James-type constants for Birkhoff--James orthogonality and approximate isosceles}

%%=============================================================%%
%% GivenName	-> \fnm{Joergen W.}
%% Particle	-> \spfx{van der} -> surname prefix
%% FamilyName	-> \sur{Ploeg}
%% Suffix	-> \sfx{IV}
%% \author*[1,2]{\fnm{Joergen W.} \spfx{van der} \sur{Ploeg} 
%%  \sfx{IV}}\email{iauthor@gmail.com}
%%=============================================================%%

\author[1]{\fnm{Zhiyao} \sur{Fang}}
\email{fzy73@stu.aqnu.edu.cn}

\author[1]{\fnm{Ying} \sur{Xu}}
\email{Y26220046@stu.aqnu.edu.cn}

\author*[1]{\fnm{Qi} \sur{Liu}}
\email{liuq67@aqnu.edu.cn}

\author[2]{\fnm{Zhaohui} \sur{Gu}}
\email{zhgugz@163.com}

\author[3]{\fnm{Yongjin} \sur{Li}}
\email{stslyj@mail.sysu.edu.cn}

\affil[1]{%
	\orgdiv{School of Mathematics and Statistics},
	\orgname{Anqing Normal University},
	\orgaddress{
		\city{Anqing},
		\postcode{246133},
		\state{Anhui},
		\country{China}
	}
}

\affil[2]{%
	\orgdiv{School of Mathematics and Statistics},
	\orgname{Guangdong University of Foreign Studies},
	\orgaddress{
		\city{Guangzhou},
		\postcode{510006},
		\country{China}
	}
}

\affil[3]{%
	\orgdiv{Department of Mathematics},
	\orgname{Sun Yat-sen University},
	\orgaddress{
		\city{Guangzhou},
		\postcode{510275},
		\state{Guangdong},
		\country{China}
	}
}

%%==================================%%
%% Sample for unstructured abstract %%
%%==================================%%

\abstract{We introduce two James-type constants associated with approximate
	isosceles and approximate Birkhoff--James orthogonality in real normed
	spaces. The isosceles family coincides with the classical James constant
	of Gao and Lau [J. Austral. Math. Soc. Ser. A 48 (1990), 101--112],
	while the Birkhoff family extends the orthogonal James constant of
	Baronti and Papini [Constr. Math. Anal. 5 (2022), no.~1, 37--45].
	We show that approximate orthogonality constraints can, in certain
	settings, recover classical geometric constants exactly. In particular,
	the approximate isosceles family collapses to the classical James
	constant, while the approximate Birkhoff--James family forms a genuine
	parameter-dependent interpolation between the orthogonal and classical
	James constants.We also compare the Birkhoff profile with several classical geometric constants and study how it changes with the parameter.Several examples are also given to illustrate the relation between geometric constants and approximate orthogonality.}

\keywords{Birkhoff-James orthogonality; isosceles orthogonality; approximate orthogonality; James constant;  fixed point property.}

%%\pacs[JEL Classification]{D8, H51}

\pacs[MSC Classification]{46B20.}

\maketitle

\section{Introduction}\label{sec:introduction}

Orthogonality in a real normed space is no longer unique. Two of the
most widely used substitutes for Hilbert space orthogonality are
isosceles orthogonality and Birkhoff--James orthogonality. For vectors
$x,y$ in a real normed space $X$, isosceles orthogonality, introduced
by James \cite{James1945}, is defined by
\[
x\perp_I y
\quad\text{if and only if}\quad
\norm{x+y}=\norm{x-y},
\]
whereas Birkhoff--James orthogonality, originating with Birkhoff
\cite{Birkhoff1935} and further developed by James
\cite{James1945,James1947}, is defined by
\[
x\perp_B y
\quad\text{if and only if}\quad
\norm{x+\lambda y}\geq\norm{x}
\quad\text{for every }\lambda\in\R.
\]

We shall use the following approximate versions of these two
orthogonality relations. Following the approximate isosceles
orthogonality considered in \cite{ChmielinskiWojcik2010}, for
$0\leq\eps<1$ we say that $x$ is $I$-$\eps$-orthogonal to $y$, and
write $x\perp_{I,\eps}y$, if
\[
(1-\eps)\norm{x-y}
\leq \norm{x+y}
\leq (1+\eps)\norm{x-y}.
\]
When $\eps=0$, this reduces to the classical isosceles orthogonality.

Following Dragomir \cite{Dragomir1991}, for $0\leq\eps<1$ we say that
$x$ is $B$-$\eps$-orthogonal to $y$, and write
$x\perp_{B,\eps}y$, if
\[
\norm{x+\lambda y}\geq(1-\eps)\norm{x}
\quad\text{for every }\lambda\in\R.
\]
When $\eps=0$, this reduces to the classical Birkhoff--James
orthogonality.

Approximate forms of orthogonality in normed spaces have been studied
extensively; see, for example,
\cite{Dragomir1991,Chmielinski2005,Chmielinski2023,
	ChmielinskiWojcik2010,Dehghani2026}.

We shall also use the von Neumann--Jordan constant
\cite{Clarkson1937},
\[
\CNJ(X)
=
\sup_{(x,y)\neq(0,0)}
\frac{\norm{x+y}^{2}+\norm{x-y}^{2}}
{2\bigl(\norm{x}^{2}+\norm{y}^{2}\bigr)},
\]
and the Zbaganu constant \cite{Zbaganu2002},
\[
\CZ(X)
=
\sup_{(x,y)\neq(0,0)}
\frac{\norm{x+y}\norm{x-y}}
{\norm{x}^{2}+\norm{y}^{2}}.
\]
These constants quantify, in different ways, the departure of the norm
from Hilbert space geometry and will be used below to estimate the
approximate endpoint constants.

We further denote by $\delta_X$ the modulus of convexity of $X$:
\[
\delta_X(t)
=
\inf\left\{
1-\frac{\norm{x+y}}{2}:
x,y\in S_X,\ \norm{x-y}\geq t
\right\},
\qquad 0\leq t\leq2,
\]
and
\[
\rho_X(t)
=
\sup\left\{
\frac{\norm{x+ty}+\norm{x-ty}}{2}-1:
x,y\in S_X
\right\},
\qquad t\geq0.
\]

We shall use the standard bounds
\[
\sqrt{2}\leq J(X)\leq2,
\qquad
1\leq \CZ(X)\leq \CNJ(X)\leq2,
\]
together with the classical characterization
\[
X\ \text{is uniformly non-square}
\quad\Longleftrightarrow\quad
J(X)<2.
\]
These facts, as well as the basic properties of $\delta_X$, are standard;
see, for example, \cite{James1964,Kato2001}.

In recent years, several orthogonality-based geometric constants have
been introduced to quantify different aspects of the geometry of Banach
spaces. Representative examples include
\[
\begin{aligned}
	D(X)
	&=\inf\left\{
	\inf_{\lambda\in\mathbb R}\norm{x+\lambda y}:
	x,y\in S_X,\ x\perp_I y
	\right\},\\
	D_{\varepsilon}(X)
	&=\inf\left\{
	\inf_{\lambda\in\mathbb R}
	\bigl(\norm{x+\lambda y}+\varepsilon|\lambda|\bigr):
	x,y\in S_X,\ x\perp_I^{\varepsilon} y
	\right\},\\
	D'(X)
	&=\sup\left\{
	\bigl|\norm{x+y}-\norm{x-y}\bigr|:
	x,y\in S_X,\ x\perp_B y
	\right\},\\
	J_{\perp}(X)
	&=\sup\left\{
	\min\{\norm{x+y},\norm{x-y}\}:
	x,y\in S_X,\ x\perp_B y
	\right\},\\
	BS(X)
	&=\sup\left\{
	\frac{\norm{x+y}}{\norm{x-y}}:
	x,y\in S_X,\ x\perp_B y
	\right\},\\
	BR(X)
	&=\sup\left\{
	\frac{\bigl|\norm{x+\lambda y}-\norm{x-\lambda y}\bigr|}{\lambda}:
	x,y\in S_X,\ x\perp_B y,\ \lambda>0
	\right\},\\	
	L_X(\lambda)
	&=\sup_{\substack{x,y\in X,\ (x,y)\neq(0,0)\\ x\perp_I y}}
	\frac{\norm{\lambda x+(1-\lambda)y}^2+\norm{(1-\lambda)x+\lambda y}^2}
	{\norm{x+y}^2},
	\qquad 0\leq \lambda<\frac12,\\
	A'_{\lambda-\mu}(X)
	&=\sup\left\{
	\frac{\norm{\lambda x+\mu y}+\norm{\mu x-\lambda y}}{2}:
	x,y\in S_X,\ x\perp_I y
	\right\},
	\qquad \lambda,\mu>0,\\
	\theta_{\diamond}(X)
	&=\sup\left\{
	\frac{\norm{\norm{x+y}x-(x+y)}}{\norm{x+y}}:
	x,y\in S_X,\ x\perp_{\diamond}y
	\right\},
	\qquad \diamond\in\{I,B\}.
\end{aligned}
\]
See, for example,
See, for example,
\cite{JiWu2006,Dehghani2026,JiJiaWu2007,PapiniWu2013,
	BarontiPapini2022,HeRaoWangLiuLi2026,BiLiuLi2026,
	NiLiuWangXiaWang2025,NiLiuZhou2025}.

The present paper follows this viewpoint from an approximate perspective:
instead of fixing an exact orthogonality relation, we introduce an
approximation parameter into the admissibility condition and study the
resulting James-type extremal quantities as functions of that parameter.

The classical James constant, introduced by Gao and Lau
\cite{GaoLau1990}, is defined by
\[
J(X)
=
\sup\left\{
\min\{\norm{x+y},\norm{x-y}\}:
x,y\in S_X
\right\},
\]
where
$
S_X=\{x\in X:\norm{x}=1\}.
$

For every real normed space of dimension at least two,
$
\sqrt{2}\leq J(X)\leq2.
$
Moreover,
$
J(X)<2
$
if and only if $X$ is uniformly non-square.

It is worth emphasizing that the James constant is one of the fundamental
geometric quantities in the study of Banach spaces and has played an important
role in the quantitative analysis of the geometry of their unit balls.
Since its introduction by Gao and Lau, the James constant has been closely
related to uniform nonsquareness, normal structure, and metric fixed point
theory, and various inequalities connecting it with other geometric parameters
have been established
\cite{GaoLau1990,Kato2001,GarciaFalset2006,TakahashiKato2009}.
In particular, considerable attention has been devoted to the computation and
estimation of the James constant in concrete Banach spaces. Exact values and
sharp estimates have been obtained for Lorentz sequence spaces, Ces\`aro and
Ces\`aro--Orlicz sequence spaces, interpolation spaces, and several special
two-dimensional normed spaces
\cite{MitaniSaitoSuzuki2008,MaligrandaPetrotSuantai2007,
	BetiukPilarskaPhothiPrus2011,KomuroSaitoTanaka2016}.
The James constant has also been investigated from the viewpoint of the
geometry of normed planes, including Radon planes, as well as through its
relationship with isosceles orthogonality and extremal configurations on the
unit sphere
\cite{Mizuguchi2020,SainGhoshPaul2023}.
Moreover, a number of generalized James constants and related variants have
been introduced in order to capture finer geometric information and to
establish further connections with convexity, smoothness, and other geometric
constants
\cite{DhompongsaKaewkhaoTasena2003,MartiniPapiniWu2024,XiaoZhu2025}.
These developments illustrate the usefulness of the James constant as a
quantitative tool for describing nonsquareness and the geometric structure of
normed spaces.

Motivated by these developments, we consider James-type extremal quantities
under approximate isosceles and approximate Birkhoff--James orthogonality
constraints. This approach allows the classical James geometry to be compared
with the geometry selected by approximate orthogonality conditions.

The paper is organized into four sections. Section~\ref{sec:foundations}
introduces the approximate James-type constants and establishes their
basic estimates and geometric consequences. Section~\ref{sec:profile}
develops the structural theory of the Birkhoff profile, including
continuity, saturation, the saturation parameter, normal-structure
criteria, and Banach--Mazur stability. Section~\ref{sec:examples} presents explicit examples and exact
computations, including classical spaces, a mixed normed plane with a
nonconstant profile, and examples showing that neither the James
constant nor the endpoint constants together with the saturation
parameter determine the full Birkhoff profile.

\section{Preliminaries and Basic Properties}\label{sec:foundations}

Throughout the paper $X$ denotes a real normed linear space with norm $\norm{\cdot}$.  Its unit sphere and closed unit ball are denoted by $S_X$ and $B_X$.  Unless explicitly stated otherwise, the dimension of $X$ is at least two.

\medskip
The basic notation in place, we now introduce the two approximate orthogonality relations and the associated endpoint functionals.  The definitions are followed by a discussion of why the endpoint formulation is needed for a James-type quantity.

Motivated by the approximate isosceles orthogonality studied in
\cite{ChmielinskiWojcik2010}, we adopt the following multiplicative convention.

Motivated by the James constant, we study approximate orthogonality from the viewpoint of geometric constants. By restricting the classical extremal quantity to approximately orthogonal pairs, we obtain parameterized James-type constants that quantify how approximate orthogonality interacts with the geometry of the unit sphere and provide a natural tool for comparing different notions of approximate orthogonality.
\begin{definition}\label{def:endpoint-james}
	Let $0\leq\eps<1$. Define
	\[
	\JI(X)=\sup\{\min\{\norm{x+y},\norm{x-y}\}:x,y\in S_X,\ x\perp_{I,\eps}y\}
	\]
	and
	\[
	\JB(X)=\sup\{\min\{\norm{x+y},\norm{x-y}\}:x,y\in S_X,\ x\perp_{B,\eps}y\}.
	\]
	If the admissible family is empty, the corresponding supremum is interpreted as zero; in the spaces considered below the admissible families are non-empty.
\end{definition}

At the exact endpoint $\eps=0$, the Birkhoff--James endpoint constant
introduced above reduces exactly to the orthogonal James constant of
Baronti and Papini \cite{BarontiPapini2022}. Indeed,
\[
\begin{aligned}
	J_B^0(X)
	&=
	\sup\left\{
	\min\{\norm{x+y},\norm{x-y}\}:
	x,y\in S_X,\ x\perp_{B,0}y
	\right\} \\
	&=
	\sup\left\{
	\min\{\norm{x+y},\norm{x-y}\}:
	x,y\in S_X,\ x\perp_B y
	\right\} \\
	&=
	J_{\perp}(X).
\end{aligned}
\]
Therefore,
\[
J_{\perp}(X)=J_B^0(X)
\leq J_B^\eps(X)
\leq J(X),
\qquad 0\leq\eps<1,
\]
and
\[
\lim_{\eps\to1^-}J_B^\eps(X)=J(X).
\]
Hence, the family $\{J_B^\eps(X)\}_{0\leq\eps<1}$ contains
$J_{\perp}(X)$ as the special case $\eps=0$ and extends the orthogonal
James constant from exact Birkhoff--James orthogonality to its
approximate counterpart, while recovering the classical James constant
in the limit as $\eps\to1^-$. 

The endpoint formulation is chosen to preserve the James-constant scale and to detect square-like behavior.

\begin{lemma}\label{lem:exist-exact}
	Let $X$ be a real normed space with $\dim X\geq2$.
	\begin{itemize}
		\item[(i)] There exist $x,y\in S_X$ with $x\perp_B y$.
		\item[(ii)] There exist $u,v\in S_X$ with $u\perp_I v$.
	\end{itemize}
\end{lemma}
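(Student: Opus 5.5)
Both parts reduce to a two-dimensional subspace $Y=\operatorname{span}\{a,b\}$ spanned by linearly independent $a,b\in X$. Such vectors exist because $\dim X\geq2$. Any pair found in $Y$ with the required property remains admissible in $X$, since both relations only involve norms of vectors in $\operatorname{span}\{x,y\}\subseteq Y$. So it suffices to work in the plane $Y$, which is finite-dimensional; in particular $S_Y$ is compact and connected.

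For (i), fix $x\in S_Y$. By Hahn--Banach there is a functional $f\in Y^*$ with $\norm{f}=1$ and $f(x)=1$. Since $\dim Y=2$, the kernel $\ker f$ is nontrivial, so pick $y\in\ker f$ with $\norm{y}=1$. Then for every $\lambda\in\R$,
\[
\norm{x+\lambda y}\geq f(x+\lambda y)=f(x)=1=\norm{x},
\]
so $x\perp_B y$. (Alternatively one could minimize $\lambda\mapsto\norm{x+\lambda y}$, but the functional argument is cleaner.)

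For (ii), I would use a continuity (intermediate value) argument. Fix linearly independent $a,b\in S_Y$ and define the path $v(t)=\dfrac{(1-t)a+t b}{\norm{(1-t)a+tb}}$ on $[0,1]$ joining $a$ to $b$. It is well defined because $(1-t)a+tb\neq0$ by linear independence. Better, to get a symmetric setup, consider the function
\[
g(t)=\norm{u(t)+w(t)}-\norm{u(t)-w(t)},
\]
where $u(t)=v(t)$ and $w(t)=v(t)'$ is chosen so that the roles swap at the endpoints. The simplest concrete choice: let $u,w$ range over unit vectors in the directions of $a+sb$ and $a-sb$? A cleaner route is to set, for $t\in[0,1]$, $u(t)=\dfrac{a+tb}{\norm{a+tb}}$ and note that swapping is awkward. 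Instead I will use the classical trick: take $u=\dfrac{x+y}{\norm{x+y}}$-type normalizations along the path $x(t)=v(t)$, $y(t)=v(1-t)$, for which
\[
h(t)=\norm{x(t)+y(t)}-\norm{x(t)-y(t)}
\]
is continuous. The difficulty is that $h(0)$ and $h(1)$ coincide, since $x(1)=y(0)$ and $y(1)=x(0)$ and $\norm{x-y}=\norm{y-x}$. So I will use instead $y(t)=-v(t)$-rotated paths: let $\gamma:[0,1]\to S_Y$ be the continuous path from $a$ to $-a$ passing through $v$ and then $-v$ appropriately, and set
\[
\phi(t)=\norm{a+\gamma(t)}-\norm{a-\gamma(t)}.
\]
Then $\phi(0)=2>0$ and $\phi(1)=-2<0$, so by the intermediate value theorem there is $t_0$ with $\phi(t_0)=0$, i.e.\ $a\perp_I\gamma(t_0)$, with both vectors in $S_X$. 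Such a path exists since $S_Y$ is a closed curve containing $a$ and $-a$; concretely, take $\gamma(t)=\dfrac{(1-2t)a+\sin(\pi t)b}{\norm{(1-2t)a+\sin(\pi t)b}}$, whose numerator never vanishes by independence.

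The only point needing care is this last continuity step, namely checking that the numerator never vanishes. At $t=\tfrac12$ the numerator equals $b\neq0$, and for $t\neq\tfrac12$ its $a$-coefficient $1-2t$ is nonzero. Everything else is routine.
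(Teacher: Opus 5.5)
Your proof is correct. The paper gives no proof of this lemma; it only calls it standard and cites James and Alonso--Martini--Wu. Your two arguments are exactly the standard ones behind those citations. For (i) you take a norming functional $f$ at $x$ and a unit vector in $\ker f$. For (ii) you apply the intermediate value theorem to $\phi(t)=\norm{a+\gamma(t)}-\norm{a-\gamma(t)}$ along a continuous path $\gamma$ in the unit sphere from $a$ to $-a$. This in fact gives the slightly stronger classical statement: every $a\in S_X$ has some $v\in S_X$ with $a\perp_I v$. When you write it up, delete the abandoned attempts in (ii); as you noticed, the swapped path $y(t)=v(1-t)$ gives $h(0)=h(1)$ and is useless. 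Go straight to the explicit path $\gamma(t)=\frac{(1-2t)a+\sin(\pi t)b}{\norm{(1-2t)a+\sin(\pi t)b}}$, whose well-definedness and endpoint values $\phi(0)=2$, $\phi(1)=-2$ you verified correctly.
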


The assertion is standard; see, for example, \cite{James1947,AlonsoMartiniWu2012}.

\medskip
We next record several basic properties.

\begin{proposition}
	Let $X$ be a real normed space with $\dim X\geq2$ and let
	$0\leq\eps<1$. Then
	\[
	1\leq J_B^\eps(X)\leq J(X)\leq2.
	\]
\end{proposition}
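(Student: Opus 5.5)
The plan is to prove the three inequalities separately, starting from the left.

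For the lower bound $1\leq J_B^\eps(X)$, I will first produce an admissible pair. By Lemma~\ref{lem:exist-exact}(i) there are $x,y\in S_X$ with $x\perp_B y$. Since $\norm{x+\lambda y}\geq\norm{x}\geq(1-\eps)\norm{x}$ for all $\lambda$, we also have $x\perp_{B,\eps}y$, so the admissible family is nonempty. Applying Birkhoff--James orthogonality with $\lambda=\pm1$ gives
\[
\norm{x+y}\geq\norm{x}=1 \quad\text{and}\quad \norm{x-y}\geq 1 .
\]
Hence $\min\{\norm{x+y},\norm{x-y}\}\geq1$, and therefore $J_B^\eps(X)\geq1$. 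Equivalently, this step is the chain $J_B^\eps(X)\geq J_B^0(X)=J_\perp(X)\geq1$.

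The middle inequality $J_B^\eps(X)\leq J(X)$ is immediate. The supremum defining $J_B^\eps(X)$ is taken over a subset of the pairs $(x,y)\in S_X\times S_X$ used in the definition of $J(X)$. A supremum over a smaller set is no larger, and the convention for empty sets is irrelevant because the family is nonempty.

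For $J(X)\leq2$, I will use the triangle inequality: for $x,y\in S_X$ we get $\min\{\norm{x+y},\norm{x-y}\}\leq\norm{x+y}\leq\norm{x}+\norm{y}=2$. This bound is also among the standard bounds quoted in the introduction.

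None of these steps is a real obstacle. The only point needing care is nonemptiness of the admissible family, which is supplied by Lemma~\ref{lem:exist-exact}(i); that lemma is exactly where $\dim X\geq2$ enters.
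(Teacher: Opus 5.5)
Your proof is correct and matches the paper's argument: an exactly Birkhoff--James orthogonal pair from Lemma~\ref{lem:exist-exact}(i) is $B$-$\eps$-admissible, and taking $\lambda=\pm1$ shows both of its endpoint norms are at least $1$, while the upper bound follows from the definition. The only addition is that you verify $J(X)\leq 2$ explicitly via the triangle inequality, which the paper treats as a standard bound.
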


\begin{proof}
	The upper bound follows immediately from the definition. For the lower
	bound, choose $x,y\in S_X$ with $x\perp_B y$ by
	Lemma~\ref{lem:exist-exact}(i). Then $x\perp_{B,\eps}y$ for every
	$\eps\in[0,1)$ and
	$\norm{x+y}\geq1$, $\norm{x-y}\geq1$.
	Hence $J_B^\eps(X)\geq1$.
\end{proof}

\begin{proposition}
	If $0\leq\eps_1\leq\eps_2<1$, then
	\[
	J_B^{\eps_1}(X)\leq J_B^{\eps_2}(X).
	\]
\end{proposition}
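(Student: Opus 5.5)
The monotonicity is a direct consequence of the nesting of the admissible families, so the plan is simply to verify that inclusion and then compare suprema.

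\emph{Step 1 (inclusion of relations).} Fix $x,y\in S_X$ with $x\perp_{B,\eps_1}y$. For every $\lambda\in\R$ we have $\norm{x+\lambda y}\geq(1-\eps_1)\norm{x}$. Since $\eps_1\leq\eps_2$, we get $1-\eps_1\geq1-\eps_2$, and $\norm{x}\geq0$. Hence
\[
\norm{x+\lambda y}\geq(1-\eps_1)\norm{x}\geq(1-\eps_2)\norm{x}
\qquad\text{for every }\lambda\in\R,
\]
which says exactly that $x\perp_{B,\eps_2}y$.

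\emph{Step 2 (comparison of suprema).} By Step 1, the admissible set of pairs defining $J_B^{\eps_1}(X)$ is contained in the admissible set defining $J_B^{\eps_2}(X)$. The objective $\min\{\norm{x+y},\norm{x-y}\}$ is the same in both definitions. A supremum over a subset cannot exceed the supremum over the larger set, so $J_B^{\eps_1}(X)\leq J_B^{\eps_2}(X)$.

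\emph{Empty-family convention.} Definition~\ref{def:endpoint-james} sets the supremum to zero when the admissible family is empty. In fact, when $\dim X\geq2$ the family is never empty, by Lemma~\ref{lem:exist-exact}(i). Even without this, the convention is consistent with the inequality: if the $\eps_1$-family were empty, $J_B^{\eps_1}(X)=0$, while $J_B^{\eps_2}(X)\geq0$ always holds.

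There is no real obstacle in this argument. The only points needing care are the direction of the inequality $1-\eps_1\geq1-\eps_2$ and the empty-set convention, both handled above.
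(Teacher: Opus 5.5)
Your proposal is correct and follows the paper's proof: every $B$-$\eps_1$-admissible pair is $B$-$\eps_2$-admissible because $(1-\eps_1)\norm{x}\geq(1-\eps_2)\norm{x}$, and the supremum over the smaller family is at most the supremum over the larger one. Your separate check of the empty-family convention is a small addition that the paper omits, and it is handled correctly.
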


\begin{proof}
	If $x\perp_{B,\eps_1}y$, then
	\[
	\norm{x+\lambda y}
	\geq (1-\eps_1)\norm{x}
	\geq (1-\eps_2)\norm{x}
	\qquad(\lambda\in\mathbb R).
	\]
	Thus every $B$-$\eps_1$-admissible pair is also
	$B$-$\eps_2$-admissible, and the result follows.
\end{proof}

\begin{proposition}
	Let $x,y\in S_X$ and suppose $x\perp_{I,\eps}y$.  Then
	\[
	\min\{\norm{x+y},\norm{x-y}\}\geq \frac{2(1-\eps)}{2-\eps}.
	\]
	In particular, every admissible pair in the definition of $\JI(X)$ has endpoint minimum bounded away from zero for fixed $\eps<1$.
\end{proposition}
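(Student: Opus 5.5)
Set $a=\norm{x+y}$ and $b=\norm{x-y}$. The approach combines the triangle-inequality identity $(x+y)+(x-y)=2x$ with the two-sided $I$-$\eps$ condition. From $\norm{x}=1$ we get $a+b\geq\norm{2x}=2$. The hypothesis $x\perp_{I,\eps}y$ gives $(1-\eps)b\leq a\leq(1+\eps)b$. We treat two cases according to which of $a,b$ is the minimum.

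\emph{Case $a\leq b$.} Here the minimum is $a$, and we have $b\leq a/(1-\eps)$. Inserting this into $a+b\geq2$ gives
\[
a\Bigl(1+\frac{1}{1-\eps}\Bigr)\geq2,
\qquad\text{that is,}\qquad
a\geq\frac{2(1-\eps)}{2-\eps}.
\]

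\emph{Case $b\leq a$.} Here the minimum is $b$, and we have $a\leq(1+\eps)b$. Then $(2+\eps)b\geq a+b\geq 2$, so $b\geq 2/(2+\eps)$. To finish this case it suffices to check that $2/(2+\eps)\geq 2(1-\eps)/(2-\eps)$. This is equivalent to $2-\eps\geq(1-\eps)(2+\eps)=2-\eps-\eps^2$, which is obvious.

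Combining the two cases yields the stated bound. It is positive for every fixed $\eps<1$, which gives the "in particular" clause. The only point needing care is the asymmetry of the two-sided condition: the lower factor $1-\eps$ is the worse one and determines the bound. I expect no real obstacle beyond this elementary comparison.
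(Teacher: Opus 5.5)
Your proof is correct and follows essentially the same route as the paper: you use $a+b\geq\norm{2x}=2$ and split according to which of $\norm{x+y},\norm{x-y}$ is smaller. You then compare $2/(2+\eps)$ with $2(1-\eps)/(2-\eps)$, and you additionally write out the one-line check of that comparison, which the paper only asserts.
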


\begin{proof}
	Put $a=\norm{x+y}$ and $b=\norm{x-y}$.  Since $2=\norm{2x}\leq a+b$, at least one endpoint is non-zero.  If $a\leq b$, then the condition $a\geq(1-\eps)b$ gives $b\leq a/(1-\eps)$, and hence
	$
	2\leq a+b\leq a+\frac{a}{1-\eps}
	=a\frac{2-\eps}{1-\eps}.
	$
	Thus $\min\{\norm{x+y},\norm{x-y}\}=a\geq 2(1-\eps)/(2-\eps)$.  If $b\leq a$, then $a\leq(1+\eps)b$, so
	$
	2\leq a+b\leq(2+\eps)b,
	$
	and $\min\{\norm{x+y},\norm{x-y}\}=b\geq2/(2+\eps)$.  Since
	$
	\frac{2}{2+\eps}\geq \frac{2(1-\eps)}{2-\eps}
	\quad(0\leq\eps<1),
	$
	the asserted estimate follows.
\end{proof}

\begin{proposition}\label{pop:recovery}
	Let $X$ be a real normed space with $\dim X\geq2$.  Then
	\[
	\lim_{\eps\to1-}\JI(X)=J(X),
	\qquad
	\lim_{\eps\to1-}\JB(X)=J(X).
	\]
	Consequently
	\[
	\sup_{0\leq\eps<1}\JI(X)=\sup_{0\leq\eps<1}\JB(X)=J(X).
	\]
\end{proposition}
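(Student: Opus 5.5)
We will prove this by a squeeze. Both families are bounded above by $J(X)$ and are nondecreasing in $\eps$, so each limit exists. It then suffices to show that every value $\min\{\norm{x+y},\norm{x-y}\}$ with $x,y\in S_X$ is eventually attained by admissible pairs.

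\emph{Upper bound and monotonicity.} The upper bound is immediate, since the admissible families in Definition~\ref{def:endpoint-james} are subsets of $S_X\times S_X$; hence $\JI(X)\leq J(X)$ and $\JB(X)\leq J(X)$ for every $\eps\in[0,1)$. Monotonicity of $\JB$ in $\eps$ was proved above. For $\JI$ the same inclusion argument works: if $\eps_1\leq\eps_2$, then
\[
(1-\eps_2)\norm{x-y}\leq(1-\eps_1)\norm{x-y}
\quad\text{and}\quad
(1+\eps_1)\norm{x-y}\leq(1+\eps_2)\norm{x-y},
\]
so every $I$-$\eps_1$-admissible pair is $I$-$\eps_2$-admissible. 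Both functions are therefore nondecreasing and bounded by $J(X)$. Hence both limits as $\eps\to1^-$ exist, equal the respective suprema over $[0,1)$, and are at most $J(X)$. This also gives the ``consequently'' part once the limits are identified.

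\emph{Lower bound.} Fix $\eta>0$ and choose $x,y\in S_X$ with $\min\{\norm{x+y},\norm{x-y}\}>J(X)-\eta$. We show this pair is admissible for all $\eps$ close enough to $1$.

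For the Birkhoff family we use the reverse triangle inequality: $\norm{x+\lambda y}\geq\bigl||\lambda|-1\bigr|$, which is large when $|\lambda|$ is large. More carefully, the continuous function $\lambda\mapsto\norm{x+\lambda y}$ tends to $\infty$ and is positive as long as $x$ and $y$ are linearly independent, so its infimum $m=\inf_\lambda\norm{x+\lambda y}$ satisfies $m>0$. Then $x\perp_{B,\eps}y$ as soon as $1-\eps\leq m$. The dependent case $y=\pm x$ gives minimum $0$, and such a pair cannot be near-extremal since $J(X)\geq\sqrt2$; so we may take $\eta<\sqrt2$ and assume independence. This is the only point needing care, namely that $m>0$, and it follows from compactness of the minimizing set of $\lambda$ together with independence.

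For the isosceles family, put $a=\norm{x+y}$ and $b=\norm{x-y}$; both are positive since $\min\{a,b\}>0$. Then $x\perp_{I,\eps}y$ iff $1-\eps\leq a/b\leq1+\eps$. The lower inequality holds once $1-\eps\leq a/b$. The upper inequality requires $a/b\leq 1+\eps<2$, which can fail if $a>2b$.

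This is the real obstacle. We handle it by replacing the pair with an isosceles one of comparable quality: since $\min\{a,b\}\leq 2\min\{a,b\}/\max\{a,b\}\cdot\ldots$ is not directly available, we instead note that $J(X)$ can be computed over pairs with $a=b$. This is the known fact (Gao--Lau) that $J(X)=\sup\{\norm{x+y}: x,y\in S_X,\ x\perp_I y\}$, obtained by continuously rotating $y$ along a path in $S_X$ from $x$ to $-x$ and applying the intermediate value theorem. Isosceles pairs are admissible for every $\eps$, so $\JI(X)\geq J(X)$ already for each $\eps$, which gives the claim a fortiori.

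Letting $\eta\to0$ completes the proof of both limits.
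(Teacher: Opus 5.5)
Your proof is correct. The Birkhoff half is essentially the paper's argument: a near-extremal pair cannot be linearly dependent, so $d=\dist(x,\R y)>0$, and the pair is $B$-$\eps$-orthogonal for $\eps\geq1-d$.

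The isosceles half takes a genuinely different route, prompted by an obstacle that is not really there. Since $J(X)\geq\sqrt2>1$, you may take $r=J(X)-\eta>1$. Then $a,b\in(r,2]$, so $a/b\in(r/2,2/r)$. Both inequalities $1-\eps\leq a/b\leq1+\eps$ therefore hold for every $\eps\geq 2/r-1$, since $2-2/r\leq r/2$ is equivalent to $(r-2)^2\geq0$. In particular, $a>2b$ never happens for near-extremal pairs. This is exactly the paper's proof; it is elementary and uses only that both endpoint norms exceed $1$.

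Your detour through the isosceles representation $J(X)=\sup\{\norm{x+y}:x,y\in S_X,\ \norm{x+y}=\norm{x-y}\}$ buys a stronger conclusion. It gives $\JI(X)=J(X)$ for every fixed $\eps$, which is the paper's later Proposition~\ref{prop:isosc-collapse}, proved there in exactly your way. The price is importing a nontrivial external fact. Citing it is legitimate, but your one-line justification of it is incomplete. The intermediate value theorem along a path from $x$ to $-x$ only yields some isosceles pair, whose common value is guaranteed to be at least $1$, not at least $\min\{a,b\}$. To get the latter, first replace $y$ by $-y$ if necessary so that $\norm{x+y}\geq\norm{x-y}$. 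Then move from $y$ to $-x$ along the arc of the unit circle of $\operatorname{span}\{x,y\}$ that avoids $x$. Finally, invoke the monotonicity lemma for normed planes: $\norm{x-z}$ is nondecreasing as $z$ runs along the unit circle from $x$ to $-x$.

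Also delete the garbled sentence involving $2\min\{a,b\}/\max\{a,b\}\cdot\ldots$.
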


\begin{proof}
	The upper bounds $\JI(X)\leq J(X)$ and $\JB(X)\leq J(X)$ are already known.  It remains to prove the reverse limiting inequalities.
	
	First consider $\JI(X)$.  If $J(X)=1$, then $\JI(X)\geq1=J(X)$ for every $\eps$, and there is nothing to prove.  Assume $J(X)>1$.  Let $r$ be any number with $1<r<J(X)$.  Choose $x,y\in S_X$ such that
	$
	\min\{\norm{x+y},\norm{x-y}\}>r.
	$
	Put $a=\norm{x+y}$ and $b=\norm{x-y}$.  Then $r<a,b\leq2$.  Hence
	$
	\frac{a}{b}\leq \frac2r<2,
	\frac{a}{b}\geq \frac r2>0.
	$
	Choose $\eps_r<1$ so large that
	$
	1-\eps_r\leq \frac r2
	\hbox{and}
	1+\eps_r\geq \frac2r.
	$
	Then for every $\eps\in[\eps_r,1)$ the pair $(x,y)$ is $I$-$\eps$-orthogonal.  Thus $\JI(X)\geq \min\{\norm{x+y},\norm{x-y}\}>r$ for all such $\eps$.  Letting $r\uparrow J(X)$ proves the first limiting formula.
	
	Now consider $\JB(X)$.  Again there is nothing to prove if $J(X)=1$.  Let $1<r<J(X)$ and choose $x,y\in S_X$ with $\min\{\norm{x+y},\norm{x-y}\}>r$.  Since both endpoints have norm greater than one, $x$ and $y$ cannot be linearly dependent.  Therefore the one-dimensional subspace $\R y$ is a proper closed subspace of the two-dimensional space $\operatorname{span}\{x,y\}$, and
	$
	d=\dist(x,\R y)=\inf_{\lambda\in\R}\norm{x+\lambda y}>0.
	$
	If $\eps>1-d$, then
	$
	\norm{x+\lambda y}\geq d\geq1-\eps
	\quad(\lambda\in\R),
	$
	so $x\perp_{B,\eps}y$.  Hence $\JB(X)\geq \min\{\norm{x+y},\norm{x-y}\}>r$ for all sufficiently large $\eps<1$.  Letting $r\uparrow J(X)$ completes the proof.Here and throughout the paper, $r\uparrow J(X)$ means that
	$r$ approaches $J(X)$ from below.
\end{proof}

\begin{proposition}
	Let $X$ be a real normed space with $\dim X\geq2$ and let $0\leq\eps<1$.  Then
	\[
	\JI(X)^2\leq 2\CNJ(X),
	\qquad
	\JB(X)^2\leq 2\CNJ(X),
	\]
	and
	\[
	\JI(X)^2\leq 2\CZ(X),
	\qquad
	\JB(X)^2\leq 2\CZ(X).
	\]
	Equivalently,
	\[
	\JI(X),\JB(X)\leq \min\{\sqrt{2\CNJ(X)},\sqrt{2\CZ(X)},J(X)\}.
	\]
\end{proposition}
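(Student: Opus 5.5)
The plan is to prove a single pointwise bound. For every pair $x,y\in S_X$ (no orthogonality needed) we have
\[
\min\{\norm{x+y},\norm{x-y}\}^2\leq \norm{x+y}\,\norm{x-y}\leq \frac{\norm{x+y}^2+\norm{x-y}^2}{2}.
\]
The first inequality holds because the minimum of two nonnegative numbers is at most their geometric mean. The second is the AM--GM inequality. Since $\norm{x}^2+\norm{y}^2=2$ for unit vectors, the definitions of $\CZ(X)$ and $\CNJ(X)$ give
\[
\norm{x+y}\,\norm{x-y}\leq \CZ(X)\bigl(\norm{x}^2+\norm{y}^2\bigr)=2\CZ(X)
\]
and
\[
\norm{x+y}^2+\norm{x-y}^2\leq 2\CNJ(X)\bigl(\norm{x}^2+\norm{y}^2\bigr)=4\CNJ(X).
\]
Combining these yields
\[
\min\{\norm{x+y},\norm{x-y}\}^2\leq 2\CZ(X)
\quad\text{and}\quad
\min\{\norm{x+y},\norm{x-y}\}^2\leq 2\CNJ(X)
\]
for all $x,y\in S_X$.

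Next I take the supremum. The admissible families in Definition~\ref{def:endpoint-james} are subsets of $S_X\times S_X$, so the suprema defining $\JI(X)$ and $\JB(X)$ are each bounded by $\sqrt{2\CZ(X)}$ and by $\sqrt{2\CNJ(X)}$. Squaring gives the four displayed inequalities. If an admissible family is empty the supremum is zero and the bound is trivial, although Lemma~\ref{lem:exist-exact} shows the families are nonempty anyway.

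Finally, the bound by $J(X)$ was already noted, since admissible pairs are pairs in $S_X$ (equivalently, it follows from Proposition~\ref{pop:recovery} and monotonicity). Taking the minimum of the three bounds gives the ``equivalently'' form.

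There is no real obstacle here. The only point needing care is that the pointwise bound via $\CZ$ uses the geometric-mean step rather than the arithmetic mean; this step is what makes the $\CZ$ bound sharper, since $\CZ(X)\leq\CNJ(X)$.
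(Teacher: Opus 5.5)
Your proof is correct and is essentially the paper's argument. The paper also bounds $\min\{\norm{x+y},\norm{x-y}\}^2$ pointwise for arbitrary $x,y\in S_X$, ignoring the orthogonality constraint, by $\norm{x+y}\norm{x-y}\leq 2\CZ(X)$ and by $\tfrac12\bigl(\norm{x+y}^2+\norm{x-y}^2\bigr)\leq 2\CNJ(X)$, then takes suprema over admissible pairs, with the $J(X)$ bound coming directly from the definition.
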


\begin{proof}
	We prove the estimates for a general admissible pair; the result follows by taking suprema.  Let $x,y\in S_X$ and put $m=\min\{\norm{x+y},\norm{x-y}\}$.  Since $\norm{x}=\norm{y}=1$,
	$
	\CNJ(X)\geq \frac{\norm{x+y}^2+\norm{x-y}^2}{4}
	\geq \frac{2m^2}{4}=\frac{m^2}{2}.
	$
	Similarly,
	$
	\CZ(X)\geq \frac{\norm{x+y}\norm{x-y}}2\geq \frac{m^2}{2}.
	$
	Taking the supremum over $I$-$\eps$ admissible pairs gives the estimates for $\JI(X)$, and taking the supremum over $B$-$\eps$ admissible pairs gives the estimates for $\JB(X)$.
\end{proof}

\begin{corollary}
	For each $0\leq\eps<1$,
	\[
	\CNJ(X)\geq \frac12\max\{\JI(X)^2,\JB(X)^2\},
	\qquad
	\CZ(X)\geq \frac12\max\{\JI(X)^2,\JB(X)^2\}.
	\]
	In particular, if either approximate endpoint constant is close to $2$, then both $\CNJ(X)$ and $\CZ(X)$ are close to their maximal value $2$.
\end{corollary}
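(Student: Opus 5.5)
This corollary is a direct restatement of the preceding proposition, so the plan is simply to rearrange the inequalities proved there. Fix $\eps\in[0,1)$. The preceding proposition gives the four bounds
\[
\JI(X)^2\leq 2\CNJ(X),\quad \JB(X)^2\leq 2\CNJ(X),\quad \JI(X)^2\leq 2\CZ(X),\quad \JB(X)^2\leq 2\CZ(X).
\]
Dividing by $2$, each of $\CNJ(X)$ and $\CZ(X)$ dominates both $\tfrac12\JI(X)^2$ and $\tfrac12\JB(X)^2$. Hence each dominates their maximum, which is exactly the displayed pair of inequalities. If an admissible family happens to be empty, the corresponding constant is $0$ by the convention in Definition~\ref{def:endpoint-james}, and the inequality holds trivially.

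For the qualitative statement, I will make ``close to $2$'' quantitative. Suppose $\max\{\JI(X),\JB(X)\}\geq 2-\eta$ with $0\leq\eta\leq 2$. Then
\[
\CNJ(X),\ \CZ(X)\ \geq\ \tfrac12(2-\eta)^2 = 2-2\eta+\tfrac{\eta^2}{2}\ \geq\ 2-2\eta .
\]
Combined with the standard upper bounds $\CZ(X)\leq\CNJ(X)\leq 2$ recalled in the introduction, this gives
\[
2-2\eta\leq\CZ(X)\leq\CNJ(X)\leq 2 .
\]
So both constants lie within $2\eta$ of their maximal value $2$. In particular, if $\JI(X)=2$ or $\JB(X)=2$ for some $\eps$, then $\CZ(X)=\CNJ(X)=2$.

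There is no real obstacle in this argument. The only care needed is to note that the bound holds for each fixed $\eps$ separately, since the proposition was proved pairwise for admissible pairs and then passed to suprema. It is also worth noting that, because $\JI(X),\JB(X)\leq J(X)$, the same reasoning yields the classical estimates $\CNJ(X),\CZ(X)\geq J(X)^2/2$ in the limit $\eps\to1^-$, using Proposition~\ref{pop:recovery}.
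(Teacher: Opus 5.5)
Your proposal is correct and matches the paper, which gives no separate proof and treats the corollary as an immediate rearrangement of the four inequalities in the preceding proposition; your bound $2-2\eta\leq\CZ(X)\leq\CNJ(X)\leq 2$ is a valid way to make ``close to $2$'' precise. One small slip in your closing aside: the estimate $\CNJ(X),\CZ(X)\geq J(X)^2/2$ follows from the limit in Proposition~\ref{pop:recovery} (or directly from the pairwise estimate applied to all $x,y\in S_X$), not from $\JI(X),\JB(X)\leq J(X)$, which points the wrong way for a lower bound.
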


\begin{theorem}
	Let $A$ denote either $I$ or $B$, and write $J_A^\eps(X)$ for the corresponding approximate endpoint constant.  If $0<t<J_A^\eps(X)$, then
	$
	\delta_X(t)\leq 1-\frac t2.
	$
	Consequently, if for some $t\in(0,2)$ one has
	$
	\delta_X(t)>1-\frac t2,
	$
	then
	\[
	J_I^\eps(X)\leq t
	\quad\hbox{and}\quad
	J_B^\eps(X)\leq t
	\qquad(0\leq\eps<1).
	\]
\end{theorem}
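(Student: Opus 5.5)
The plan is to argue directly from the definitions of $\delta_X$ and of the endpoint constants. Fix $0<t<J_A^\eps(X)$. By the definition of $J_A^\eps(X)$ as a supremum, there is an admissible pair $x,y\in S_X$ (with $x\perp_{A,\eps}y$) such that
\[
m:=\min\{\norm{x+y},\norm{x-y}\}>t .
\]
Admissibility plays no further role; we only need a pair of unit vectors whose sum and difference both exceed $t$. This is the same observation that gives $J_A^\eps(X)\leq J(X)$.

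Next we use the pair $(x,y)$ as a test pair in the definition of $\delta_X(t)$. Since $\norm{x-y}\geq m>t$, the pair is admissible in the infimum defining $\delta_X(t)$. Hence
\[
\delta_X(t)\leq 1-\frac{\norm{x+y}}{2}\leq 1-\frac{m}{2}<1-\frac t2 ,
\]
which gives the first assertion, in fact with strict inequality. If we prefer to avoid choosing the pair in a strict way, we can instead take pairs with $m\to J_A^\eps(X)$ and still obtain $\delta_X(t)\leq 1-t/2$.

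The consequence is the contrapositive. Suppose $\delta_X(t)>1-t/2$ for some $t\in(0,2)$. If $J_A^\eps(X)>t$ held for some $\eps$ and some $A\in\{I,B\}$, the first part would give $\delta_X(t)\leq 1-t/2$, a contradiction. Therefore $J_I^\eps(X)\leq t$ and $J_B^\eps(X)\leq t$ for every $0\leq\eps<1$.

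There is no serious obstacle here. The only point needing care is the role of the empty-family convention (supremum $=0$). In that case the hypothesis $0<t<J_A^\eps(X)$ is vacuous, so no pair needs to be produced. The existence of an admissible pair with $m>t$ follows purely from the supremum being strictly larger than $t$.
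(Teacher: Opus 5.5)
Your proof is correct and follows the paper's argument: you pick an admissible pair with $\min\{\norm{x+y},\norm{x-y}\}>t$, use it as a test pair in the infimum defining $\delta_X(t)$ to get $\delta_X(t)<1-t/2$, and then take the contrapositive for the second assertion. Your remark on the empty-family convention is a harmless extra check that the paper leaves implicit.
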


\begin{proof}
	Assume $0<t<J_A^\eps(X)$.  By the definition of the supremum there exist admissible $x,y\in S_X$ such that $\min\{\norm{x+y},\norm{x-y}\}>t$.  In particular,
	$
	\norm{x-y}>t,
	\norm{x+y}>t.
	$
	Since $\norm{x-y}\geq t$, the definition of $\delta_X(t)$ gives
	$
	\delta_X(t)
	\leq 1-\frac{\norm{x+y}}2
	<1-\frac t2.
	$
	This proves the first assertion.  The final assertion is the contrapositive.
\end{proof}

\begin{corollary}
	Suppose there exists $t_0\in(0,2)$ such that $\delta_X(t_0)>1-t_0/2$.  Then all endpoint approximate James constants are bounded by $t_0$:
	\[
	\sup_{0\leq\eps<1}J_I^\eps(X)\leq t_0,
	\qquad
	\sup_{0\leq\eps<1}J_B^\eps(X)\leq t_0.
	\]
	In particular $J(X)\leq t_0<2$.
\end{corollary}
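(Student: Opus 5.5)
The corollary follows from the preceding theorem by taking suprema over $\eps$, and then from Proposition~\ref{pop:recovery} for the last claim. The plan has three short steps.

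\emph{Step 1: a uniform bound in $\eps$.} Fix $t_0\in(0,2)$ with $\delta_X(t_0)>1-t_0/2$. The hypothesis of the preceding theorem then holds with $t=t_0$. Its final assertion gives
\[
J_I^\eps(X)\leq t_0
\quad\text{and}\quad
J_B^\eps(X)\leq t_0
\qquad\text{for every }0\leq\eps<1.
\]
The key point is that $t_0$ does not depend on $\eps$. If one prefers to argue directly, suppose $J_A^\eps(X)>t_0$ for some $\eps$. The first assertion of the theorem, applied with $t=t_0<J_A^\eps(X)$, yields $\delta_X(t_0)\leq 1-t_0/2$, which is a contradiction.

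\emph{Step 2: pass to the suprema.} Since the bound of Step~1 is uniform in $\eps\in[0,1)$, taking the supremum over $\eps$ gives
\[
\sup_{0\leq\eps<1}J_I^\eps(X)\leq t_0
\quad\text{and}\quad
\sup_{0\leq\eps<1}J_B^\eps(X)\leq t_0.
\]

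\emph{Step 3: the bound on $J(X)$.} By Proposition~\ref{pop:recovery}, $\sup_{0\leq\eps<1}J_I^\eps(X)=J(X)$. Combining this with Step~2 gives $J(X)\leq t_0$, and $t_0<2$ by assumption. A direct check is also available: if $J(X)>t_0$, there are $x,y\in S_X$ with $\min\{\norm{x+y},\norm{x-y}\}>t_0$. Then $\norm{x-y}\geq t_0$, so by the definition of $\delta_X$,
\[
\delta_X(t_0)\leq 1-\tfrac{\norm{x+y}}{2}<1-\tfrac{t_0}{2},
\]
again a contradiction.

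There is no real obstacle here. The only point requiring care is the observation that the bound in the preceding theorem holds uniformly in $\eps$, together with the appeal to Proposition~\ref{pop:recovery} (or the direct argument) to identify the common supremum with $J(X)$.
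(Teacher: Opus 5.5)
Your proposal is correct and follows the paper's proof: apply the preceding theorem with $t=t_0$ for each fixed $\eps$, take suprema over $\eps$, and use Proposition~\ref{pop:recovery} to identify the common supremum with $J(X)$. Your additional direct $\delta_X$-argument for $J(X)\leq t_0$ is also valid, and it simply bypasses the appeal to Proposition~\ref{pop:recovery}.
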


\begin{proof}
	The preceding theorem gives the first two estimates for every fixed $\eps$.  Taking suprema over $\eps$ and using Proposition~\ref{pop:recovery} yields $J(X)\leq t_0$.
\end{proof}

\medskip
These estimates lead naturally to uniform non-squareness and its metric fixed point consequences.  We record the relevant equivalences before turning to concrete computations.

A Banach space $X$ is uniformly non-square\cite{James1964} if there exists $\eta>0$ such that for all $x,y\in S_X$,
\[
\min\{\norm{x+y},\norm{x-y}\}\leq 2-\eta.
\]
Equivalently, $J(X)<2$.  This equivalence is a classical theorem of James.

\begin{theorem}\label{thm:uniform-nonsquare}
	Let $X$ be a real Banach space with $\dim X\geq2$.  The following are equivalent.
	\begin{itemize}
		\item[(i)] $X$ is uniformly non-square.
		\item[(ii)] $\displaystyle \sup_{0\leq\eps<1}J_I^\eps(X)<2$.
		\item[(iii)] $\displaystyle \sup_{0\leq\eps<1}J_B^\eps(X)<2$.
	\end{itemize}
\end{theorem}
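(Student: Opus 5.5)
The plan is to reduce everything to Proposition~\ref{pop:recovery} together with James's classical characterization that uniform non-squareness is equivalent to $J(X)<2$. Proposition~\ref{pop:recovery} gives
\[
\sup_{0\leq\eps<1}J_I^\eps(X)=\sup_{0\leq\eps<1}J_B^\eps(X)=J(X).
\]
So conditions (ii) and (iii) each say exactly that $J(X)<2$. Combined with (i)$\Leftrightarrow J(X)<2$, this yields all three equivalences at once.

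Concretely, I will prove (i)$\Rightarrow$(ii) and (i)$\Rightarrow$(iii) first. If $X$ is uniformly non-square, then $J(X)<2$. Since $J_I^\eps(X)\leq J(X)$ and $J_B^\eps(X)\leq J(X)$ for every $\eps$ (admissible pairs form a subfamily of $S_X\times S_X$), both suprema are at most $J(X)<2$. This direction does not even need the limit statement.

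For (ii)$\Rightarrow$(i) and (iii)$\Rightarrow$(i), I will argue by contraposition. If $X$ is not uniformly non-square, then $J(X)=2$. By Proposition~\ref{pop:recovery}, $\lim_{\eps\to1^-}J_I^\eps(X)=J(X)=2$, so $\sup_\eps J_I^\eps(X)=2$; the same holds for $J_B^\eps$. Hence (ii) and (iii) fail.

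I expect no real obstacle here. The only point needing care is that Proposition~\ref{pop:recovery} was proved for normed spaces with $\dim X\geq2$, which is covered by the hypothesis. The degenerate case $J(X)=1$ noted there cannot occur, since $J(X)\geq\sqrt2$, and in any case it is irrelevant to whether the value is $2$. I would also check that the definition of uniform non-squareness used in the paper, with the minimum bounded by $2-\eta$, matches $J(X)<2$ via the supremum. This is immediate: $J(X)\leq 2-\eta$ if and only if the minimum is at most $2-\eta$ for all $x,y\in S_X$.
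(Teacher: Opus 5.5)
Your proposal is correct and matches the paper's proof: both use Proposition~\ref{pop:recovery} to identify $\sup_{0\leq\eps<1}J_I^\eps(X)$ and $\sup_{0\leq\eps<1}J_B^\eps(X)$ with $J(X)$, and then apply the equivalence between $J(X)<2$ and uniform non-squareness. Splitting the argument into the trivial bound $J_A^\eps(X)\leq J(X)$ for one direction and the limit statement for the other only spells out that same argument.
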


\begin{proof}
	By Proposition~\ref{pop:recovery},
	$
	\displaystyle\sup_{0\leq\eps<1}J_I^\eps(X)
	=\displaystyle\sup_{0\leq\eps<1}J_B^\eps(X)=J(X).
	$
	Thus (ii) and (iii) are both equivalent to $J(X)<2$, which is equivalent to uniform non-squareness.
\end{proof}

\begin{corollary}
	Let $X$ be a real Banach space with $\dim X\geq2$. If
	\[
	\sup_{0\leq\eps<1}J_I^\eps(X)<2
	\quad\hbox{or}\quad
	\sup_{0\leq\eps<1}J_B^\eps(X)<2,
	\]
	then $X$ has the fixed point property for nonexpansive mappings.
\end{corollary}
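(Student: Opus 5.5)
The plan is to reduce the corollary to Theorem~\ref{thm:uniform-nonsquare} and then invoke known fixed point theorems for uniformly non-square spaces. Neither of the two new constants needs to be handled directly.

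\emph{Step 1.} Suppose either $\sup_{0\leq\eps<1}J_I^\eps(X)<2$ or $\sup_{0\leq\eps<1}J_B^\eps(X)<2$. By Proposition~\ref{pop:recovery}, both suprema equal $J(X)$, so in either case $J(X)<2$. Theorem~\ref{thm:uniform-nonsquare} then gives that $X$ is uniformly non-square.

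\emph{Step 2.} We pass from uniform non-squareness to the fixed point property. By James' theorem, every uniformly non-square Banach space is (super)reflexive. The fixed point property for nonexpansive mappings in uniformly non-square spaces is the theorem of Garc\'{\i}a-Falset, Llor\'ens-Fuster and Mazcu\~n\'an-Navarro \cite{GarciaFalset2006}. I would cite that result directly.

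\emph{Main obstacle.} This step is external rather than computational. The cited theorem is genuinely deep: it is proved via the coefficient $R(1,X)$ and its relation with uniform non-squareness. Uniform non-squareness does not in general give normal structure, so a self-contained argument is not available from what is established in the paper. I therefore expect the proof to be a two-line reduction: Proposition~\ref{pop:recovery} plus Theorem~\ref{thm:uniform-nonsquare}, followed by the citation.

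A weaker but self-contained alternative is possible when $J(X)<\tfrac{1+\sqrt5}{2}$. By the Gao--Lau theorem, such $X$ has uniform normal structure, and Kirk's theorem then gives the fixed point property. This would not cover the full range $J(X)<2$, so the citation of \cite{GarciaFalset2006} is needed for the general case.
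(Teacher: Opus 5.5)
Your proposal is correct and matches the paper's proof. Either hypothesis gives $J(X)<2$ via Proposition~\ref{pop:recovery} and Theorem~\ref{thm:uniform-nonsquare}, so $X$ is uniformly non-square, and the conclusion then follows from the theorem of Garc\'{\i}a-Falset, Llorens-Fuster and Mazcu\~n\'an-Navarro \cite{GarciaFalset2006}. One small correction to your optional aside: the threshold $J(X)<\frac{1+\sqrt5}{2}$ for uniform normal structure is due to Dhompongsa, Kaewkhao and Tasena, whereas Gao and Lau's original threshold was $3/2$.
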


\begin{proof}
	By Theorem~\ref{thm:uniform-nonsquare}, either assumption implies
	$J(X)<2$, and hence $X$ is uniformly non-square. The conclusion
	therefore follows from the fixed point theorem for uniformly
	non-square Banach spaces in \cite{GarciaFalset2006}.
\end{proof}

\section{Approximate James constants and their geometric properties}\label{sec:profile}

The original definitions admit a substantial simplification on the isosceles side.  We first record this fact and then focus on the genuinely varying Birkhoff profile.  Throughout this section, set
\[
d(x,y)=\operatorname{dist}(x,\mathbb{R}y),\qquad x,y\in S_X.
\]
The condition $x\perp_{B,\eps}y$ is equivalent to $d(x,y)\geq1-\eps$ and is the global approximate Birkhoff--James condition associated with Dragomir's formulation \cite{Dragomir1991}.

\begin{proposition}\label{prop:isosc-collapse}
	For every real normed space $X$ with $\dim X\geq2$ and every $0\leq\eps<1$,
	\[
	J_I^\eps(X)=J(X).
	\]
	Consequently, only the Birkhoff profile
	$\eps\mapsto J_B^\eps(X)$ may vary with $\eps$.
\end{proposition}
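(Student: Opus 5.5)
The inequality $J_I^\eps(X)\leq J(X)$ holds because the admissible family for $J_I^\eps(X)$ is a subfamily of $S_X\times S_X$. So the plan is to prove the reverse inequality $J_I^\eps(X)\geq J(X)$. Since every exactly isosceles orthogonal pair satisfies $x\perp_{I,\eps}y$ for all $\eps\in[0,1)$, we have $J_I^0(X)\leq J_I^\eps(X)$. It therefore suffices to show
\[
J(X)\leq \sup\{\norm{x+y}:x,y\in S_X,\ x\perp_I y\}=J_I^0(X).
\]
This is the classical fact, going back to Gao and Lau \cite{GaoLau1990}, that the James constant is attained along isosceles orthogonal pairs. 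I would give a self-contained argument for it as follows.

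Fix $x,y\in S_X$ with $m=\min\{\norm{x+y},\norm{x-y}\}>1$; pairs with $m\leq1$ are irrelevant, since $J(X)\geq\sqrt2>1$. Then $y\neq\pm x$, so $E=\operatorname{span}\{x,y\}$ is two-dimensional. Replacing $y$ by $-y$ if necessary, assume $\norm{x-y}\leq\norm{x+y}$, so that $m=\norm{x-y}$. Let $\gamma$ be the arc of the unit circle $S_E$ running from $x$ to $-x$ through $y$. The key tool is the standard monotonicity lemma for normed planes: as $z$ moves along $\gamma$ from $x$ to $-x$, the quantity $\norm{x-z}$ is non-decreasing and $\norm{x+z}$ is non-increasing. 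I would either cite this from the literature on normed planes (e.g.\ \cite{AlonsoMartiniWu2012}) or prove it by the usual convexity argument: write a later point of the arc as a combination of an earlier point and $-x$, and use the triangle inequality within the sector.

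Next, apply the intermediate value theorem to the continuous function $g(z)=\norm{x+z}-\norm{x-z}$ on $\gamma$. We have $g(x)=2>0$ and $g(-x)=-2<0$. Moreover $g(y)\geq0$, so on the sub-arc from $y$ to $-x$ there is a point $z_0$ with $g(z_0)=0$, that is, $x\perp_I z_0$ with $z_0\in S_X$. Because $z_0$ lies at or beyond $y$ on $\gamma$, monotonicity gives
\[
\norm{x+z_0}=\norm{x-z_0}\geq\norm{x-y}=m .
\]
Hence $J_I^0(X)\geq m$, and taking the supremum over $x,y$ gives $J_I^0(X)\geq J(X)$. Combined with the first paragraph, this yields $J_I^\eps(X)=J(X)$ for every $\eps\in[0,1)$. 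The final assertion then follows because the isosceles profile is constant, so only $\eps\mapsto J_B^\eps(X)$ can vary.

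The main obstacle is the monotonicity lemma on the unit circle of a normed plane, specifically making the orientation and parametrization of $\gamma$ rigorous. If a citation is not acceptable, I would prove it by parametrizing $\gamma$ by angle in a fixed Euclidean structure on $E$ and using convexity of the unit ball. A fallback that avoids monotonicity would be a compactness/extremal argument within $E$, but the monotone route is cleaner.
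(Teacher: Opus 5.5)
Your argument is correct and follows the paper's route: the upper bound $J_I^\eps(X)\leq J(X)$ is trivial, and the lower bound comes from two facts. Exactly isosceles pairs are $I$-$\eps$-admissible, and $J(X)=\sup\{\norm{x+y}:x,y\in S_X,\ x\perp_I y\}$. The only difference is that the paper cites this representation of $J(X)$ (Kato et al.; Baronti--Papini), while you derive it from the monotonicity lemma on the unit circle of $\operatorname{span}\{x,y\}$ and the intermediate value theorem, which is a sound self-contained justification.
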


\begin{proof}
	Fix $0\leq\eps<1$. Recall that
	\[
	J_I^\eps(X)
	=
	\sup\left\{
	\min\{\norm{x+y},\norm{x-y}\}:
	x,y\in S_X,\ x\perp_{I,\eps}y
	\right\},
	\]
	where $x\perp_{I,\eps}y$ means that
	\[
	(1-\eps)\norm{x-y}
	\leq \norm{x+y}
	\leq (1+\eps)\norm{x-y}.
	\]
	We prove the equality by establishing both inequalities.
	
	First, let $x,y\in S_X$ satisfy $x\perp_{I,\eps}y$.
	The definition of the classical James constant gives
	\[
	\min\{\norm{x+y},\norm{x-y}\}
	\leq
	\sup_{u,v\in S_X}
	\min\{\norm{u+v},\norm{u-v}\}
	=J(X).
	\]
	This estimate holds for every pair admissible in the
	definition of $J_I^\eps(X)$. Taking the supremum over
	all such pairs, we obtain	$	J_I^\eps(X)\leq J(X).$
	For the reverse inequality, we use the classical
	isosceles representation of the James constant:
	\[
	J(X)
	=
	\sup\left\{
	\norm{x+y}:
	x,y\in S_X,\ \norm{x+y}=\norm{x-y}
	\right\};
	\]
	see, for example, \cite{Kato2001,BarontiPapini2022}.
	Let $x,y\in S_X$ satisfy
	$\norm{x+y}=\norm{x-y}$, and write
	\[
	a=\norm{x+y}=\norm{x-y}.
	\]
	Since $a\geq0$ and $0\leq\eps<1$, we have
	\[
	(1-\eps)a\leq a\leq(1+\eps)a.
	\]
	Equivalently,
	\[
	(1-\eps)\norm{x-y}
	\leq \norm{x+y}
	\leq (1+\eps)\norm{x-y}.
	\]
	Thus $x\perp_{I,\eps}y$, and this exact isosceles pair
	is admissible in the definition of $J_I^\eps(X)$.
	Consequently,
	\[
	\norm{x+y}
	=
	\min\{\norm{x+y},\norm{x-y}\}
	\leq J_I^\eps(X).
	\]
	Taking the supremum over all exact isosceles pairs
	in $S_X\times S_X$ and applying the representation above,
	we conclude that$	J(X)\leq J_I^\eps(X).$
	Combining the two inequalities yields	$	J_I^\eps(X)=J(X).$
	Since $\eps$ was arbitrary and $J(X)$ is independent of
	$\eps$, the isosceles profile is constant on $[0,1)$.
	Hence, among the two profiles considered here, only
	the Birkhoff profile can be nonconstant.
\end{proof}

Write $J_\perp(X)=J_B^0(X)$ for the orthogonal James constant of Baronti and Papini \cite{BarontiPapini2022}.  Then
\[
1\leq J_\perp(X)\leq J_B^\eps(X)\leq J(X)\leq2.
\]
Moreover, the estimate $J_\perp(X)\geq2J(X)-2$ from \cite{BarontiPapini2022} shows that, for each fixed $\eps$,
\[
J_B^\eps(X)<2\quad\Longleftrightarrow\quad J(X)<2.
\]

The next two lemmas isolate the geometric mechanisms underlying the
Birkhoff profile. The first gives a dual characterization and a
two-dimensional reduction, while the second converts large endpoint
values into a quantitative saturation bound.
\begin{lemma}\label{lem:annihilator-distance}
	For $x,y\in S_X$, the minimum defining $d(x,y)$ is attained and
	\[
	d(x,y)=\max\{f(x): f\in B_{X^*},\ f(y)=0\}.
	\]
	If $d(x,y)>0$, the maximum can be taken over $S_{X^*}$.  In addition,
	\[
	J_B^\eps(X)=\sup\{J_B^\eps(E):E\subseteq X,\ \dim E=2\}.
	\]
\end{lemma}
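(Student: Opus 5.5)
The plan has three parts: attainment of the distance, the dual formula, and the two-dimensional reduction.

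\emph{Attainment.} The function $\lambda\mapsto\norm{x+\lambda y}$ is continuous and convex on $\R$. Since $\norm{y}=1$, it satisfies $\norm{x+\lambda y}\geq|\lambda|-1$, so it tends to $+\infty$ as $|\lambda|\to\infty$. The infimum may therefore be taken over the compact interval $|\lambda|\leq 2$. On that interval the infimum is attained, and this gives $d(x,y)=\min_\lambda\norm{x+\lambda y}$.

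\emph{Dual formula.} I will prove the inequality in each direction.

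For ``$\geq$'', take $f\in B_{X^*}$ with $f(y)=0$. Then for every $\lambda$,
\[
f(x)=f(x+\lambda y)\leq\norm{x+\lambda y},
\]
and hence $f(x)\leq d(x,y)$.

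For ``$\leq$'', let $Y=\R y$, which is closed, and let $\lambda_0$ attain the minimum. First consider $d=d(x,y)>0$. Then $x\notin Y$. On $\operatorname{span}\{x,y\}$ define
\[
g(sx+ty)=sd .
\]
Its norm is at most one, because
\[
|sd|\leq|s|\,\norm{x+(t/s)y}=\norm{sx+ty}\qquad(s\neq0).
\]
By Hahn--Banach, $g$ extends to some $f\in B_{X^*}$ with $f(y)=0$ and $f(x)=d$. Because $d>0$, we also have $\norm{f}\geq f(x-\lambda_0 y)/\norm{x-\lambda_0y}$. Choosing the sign of $\lambda_0$ correctly gives $\norm{f}\geq d/d=1$, so $f\in S_{X^*}$. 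This shows the maximum can be taken over $S_{X^*}$.

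If $d=0$, the functional $f=0$ attains the maximum value $0$.

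\emph{Two-dimensional reduction.} For a two-dimensional subspace $E$ containing $x,y$, set $d_E(x,y)=\dist_E(x,\R y)$. Since $\R y\subseteq E$, the $\lambda$'s are the same in both computations, so $d_E(x,y)=d_X(x,y)$. Consequently $x\perp_{B,\eps}y$ holds in $E$ if and only if it holds in $X$. The endpoint quantities $\norm{x\pm y}$ are also unchanged, and $S_E=S_X\cap E$.

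From this the two inequalities follow.
\begin{itemize}
\item Every admissible pair for $J_B^\eps(E)$ is admissible for $J_B^\eps(X)$, so $J_B^\eps(E)\leq J_B^\eps(X)$.
\item Conversely, take an admissible pair $(x,y)$ in $X$. If $x,y$ are independent, put $E=\operatorname{span}\{x,y\}$. If they are dependent, then $y=\pm x$, and any two-dimensional $E\ni x$ works, since $\dim X\geq2$. In either case the value $\min\{\norm{x+y},\norm{x-y}\}$ is realized in $E$.
\end{itemize}
Taking suprema gives the stated equality. The zero-supremum convention causes no trouble, because every such $E$ has a nonempty admissible family by Lemma~\ref{lem:exist-exact}.

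\emph{Main obstacle.} I expect the only delicate point to be the norming claim for $S_{X^*}$. The cleanest route is the standard fact that for a closed subspace $Y$ and $x\notin Y$ there is $f\in S_{X^*}$ with $f|_Y=0$ and $f(x)=\dist(x,Y)$. I would cite this directly rather than verify $\norm{f}=1$ by hand.
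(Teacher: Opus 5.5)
Your proposal is correct and follows essentially the paper's route: coercivity of $\lambda\mapsto\norm{x+\lambda y}$ gives attainment, the functional $sx+ty\mapsto s\,d(x,y)$ on $\operatorname{span}\{x,y\}$ is extended by Hahn--Banach with norm one witnessed at the minimizer, and $d$ and the endpoint norms are unchanged on passing to the span. Two cosmetic remarks: the witness should be $x+\lambda_0y$ rather than $x-\lambda_0y$, and your dependent-case branch is vacuous, since $y=\pm x$ forces $d(x,y)=0<1-\eps$, so every admissible pair is automatically independent, exactly as the paper observes.
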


\begin{proof}
	The function $t\mapsto\norm{x+ty}$ is continuous and bounded below by $|t|-1$, hence it attains its minimum.  Every $f\in B_{X^*}$ with $f(y)=0$ satisfies $f(x)\leq d(x,y)$.  If $x,y$ are independent, define on $\operatorname{span}\{x,y\}$
	$
	f_0(ax+by)=a\,d(x,y).
	$
	The definition of the distance gives $|a|d(x,y)\leq\norm{ax+by}$.  At a minimizing point $x+t_0y$ equality is attained, hence $\norm{f_0}=1$; Hahn--Banach extends $f_0$ to $X$.  The dependent case is immediate.  Finally, every $B$-$\eps$ admissible pair is independent because $1-\eps>0$, and its endpoint norms and its distance to the generated line are unchanged inside its two-dimensional span.
\end{proof}

\begin{lemma}\label{lem:saturation-bound}
	If $x,y\in S_X$ and $\min\{\norm{x+y},\norm{x-y}\}>1$, then
	\[
	d(x,y)\geq\frac{\min\{\norm{x+y},\norm{x-y}\}}{2}.
	\]
	Consequently,
	\[
	J_B^\eps(X)=J(X)
	\qquad\text{whenever}\qquad
	\eps>1-\frac{J(X)}2.
	\]
	Since $J(X)\geq\sqrt2$, in particular,
	\[
	J_B^\eps(X)=J(X)
	\qquad
	\text{for every }
	\eps\in\left(1-\frac1{\sqrt2},1\right).
	\]
\end{lemma}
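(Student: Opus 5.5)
The plan is to bound $\norm{x+\lambda y}$ from below for every real $\lambda$ by two elementary triangle-inequality estimates, and then optimise over $|\lambda|$. Write $m=\min\{\norm{x+y},\norm{x-y}\}$ and $s=|\lambda|$. Since $d(x,y)=\inf_{\lambda}\norm{x+\lambda y}$ (attained by Lemma~\ref{lem:annihilator-distance}), it suffices to show $\norm{x+\lambda y}\geq m/2$ for all $\lambda$.

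\textbf{Step 1: two lower bounds.} For $\lambda\geq0$ I use $x+y=(x+\lambda y)+(1-\lambda)y$; for $\lambda<0$ I use $x-y=(x+\lambda y)-(1+\lambda)y$. Either way,
\[
\norm{x+\lambda y}\geq m-\bigl|1-s\bigr|.
\]
Separately, the reverse triangle inequality with $\norm{x}=\norm{y}=1$ gives
\[
\norm{x+\lambda y}\geq\bigl|1-s\bigr|.
\]
Hence $\norm{x+\lambda y}\geq\max\{m-|1-s|,\,|1-s|\}$. The maximum of $m-u$ and $u$ over $u\geq0$ is always at least $m/2$, with equality at $u=m/2$. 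Therefore $\norm{x+\lambda y}\geq m/2$ for all $\lambda$, i.e.\ $d(x,y)\geq m/2$. The hypothesis $m>1$ is not actually needed for this step; it only ensures that the pair is independent, which is consistent with the earlier discussion.

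\textbf{Step 2: saturation.} Suppose $\eps>1-J(X)/2$, i.e.\ $J(X)>2(1-\eps)$. I would take any $r$ with $\max\{1,2(1-\eps)\}<r<J(X)$. If $J(X)=1$, there is nothing to prove, by the bounds $1\leq J_B^\eps(X)\leq J(X)$. Choose $x,y\in S_X$ with $m>r$. Step~1 gives $d(x,y)\geq m/2>1-\eps$, so $x\perp_{B,\eps}y$ by the reformulation $x\perp_{B,\eps}y\iff d(x,y)\geq1-\eps$ stated at the start of this section. Thus $J_B^\eps(X)\geq m>r$. Letting $r\uparrow J(X)$ and combining with $J_B^\eps(X)\leq J(X)$ yields equality. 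The final claim follows because $J(X)\geq\sqrt2$ gives $1-J(X)/2\leq1-1/\sqrt2$.

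\textbf{Main obstacle.} The only delicate point is choosing the right decomposition in Step~1 so that the bound is uniform in the sign of $\lambda$. Pairing $x+y$ with $\lambda\geq0$ and $x-y$ with $\lambda<0$ handles this, since $m$ controls both endpoints. Everything else is routine.
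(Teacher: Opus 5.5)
Your proof is correct and is essentially the paper's argument. The paper places a minimizer $t_0\in[0,1)$ of $t\mapsto\norm{x+ty}$, using convexity, $\norm{x\pm y}>1$ and the swap $y\mapsto -y$, and then adds $1-a\leq t_0$ and $m-a\leq 1-t_0$; these are exactly your bounds $\norm{x+\lambda y}\geq|1-|\lambda||$ and $\norm{x+\lambda y}\geq m-|1-|\lambda||$ at $\lambda=t_0$, and your saturation step is the paper's. Working pointwise in $\lambda$ avoids locating the minimizer, which is why, as you note, the hypothesis $m>1$ is not needed for the distance estimate.
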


\begin{proof}
	Let
	$
	F(t)=\norm{x+ty},
	\qquad t\in\mathbb{R}.
	$
	Since $F$ is continuous and
	$
	F(t)\geq |t|\norm{y}-\norm{x}=|t|-1,
	$
	we have $F(t)\to\infty$ as $|t|\to\infty$. Hence $F$ attains its
	minimum at some $t_0\in\mathbb{R}$, and
	$
	d(x,y)=F(t_0).
	$
	
	Because
	$
	F(0)=\norm{x}=1
	$
	and
	$
	F(1)=\norm{x+y}>1,
	F(-1)=\norm{x-y}>1,
	$
	the convexity of $F$ implies that every minimizing point lies in
	$(-1,1)$. Replacing $y$ by $-y$ if necessary, we may assume
	$
	0\leq t_0<1.
	$
	Set
	$
	a=F(t_0)=d(x,y).
	$
	
	Since $\norm{y}=1$, the reverse triangle inequality gives
	\[
	|F(s)-F(t)|
	\leq \norm{(s-t)y}
	=|s-t|,
	\qquad s,t\in\mathbb{R}.
	\]
	Thus $F$ is $1$-Lipschitz. Applying this first with $s=0$ and
	$t=t_0$, we obtain
	$
	1-a
	=
	F(0)-F(t_0)
	\leq t_0.
	$
	Moreover, since
	$
	\min\{\norm{x+y},\norm{x-y}\}
	=
	\min\{F(1),F(-1)\}
	\leq F(1),
	$
	we have
	$
	\min\{\norm{x+y},\norm{x-y}\}-a
	\leq
	F(1)-F(t_0)
	\leq
	1-t_0.
	$
	Adding the last two inequalities yields
	$
	1+\min\{\norm{x+y},\norm{x-y}\}-2a\leq1,
	$
	and therefore
	$
	a\geq\frac{\min\{\norm{x+y},\norm{x-y}\}}2.
	$
	Since $a=d(x,y)$, this proves
	$
	d(x,y)\geq\frac{\min\{\norm{x+y},\norm{x-y}\}}2.
	$
	
	We now prove the saturation statement. Let
	$
	\eps>1-\frac{J(X)}2.
	$
	Then
	$
	2(1-\eps)<J(X).
	$
	Choose any number $r$ such that
	$
	\max\{1,2(1-\eps)\}<r<J(X).
	$
	By the definition of $J(X)$, there exist $x,y\in S_X$ such that
	$
	\min\{\norm{x+y},\norm{x-y}\}>r.
	$
	Since $r>1$, the first part of the lemma applies and gives
	$
	d(x,y)
	\geq
	\frac{\min\{\norm{x+y},\norm{x-y}\}}2
	>
	\frac r2
	>
	1-\eps.
	$
	Hence
	$
	x\perp_{B,\eps}y,
	$
	so the pair $(x,y)$ is admissible in the definition of
	$J_B^\eps(X)$. Consequently,
	$
	J_B^\eps(X)\geq \min\{\norm{x+y},\norm{x-y}\}>r.
	$
	Since $r$ may be chosen arbitrarily close to $J(X)$ from below,
	$
	J_B^\eps(X)\geq J(X).
	$
	The reverse inequality
	$
	J_B^\eps(X)\leq J(X)
	$
	holds by definition. Therefore
	$
	J_B^\eps(X)=J(X).
	$
	
	Finally, since $J(X)\geq\sqrt2$,
	$
	1-\frac{J(X)}2
	\leq
	1-\frac1{\sqrt2}.
	$
	Hence every
	$
	\eps\in\left(1-\frac1{\sqrt2},1\right)
	$
	satisfies
	$
	\eps>1-\frac{J(X)}2,
	$
	and therefore
	$
	J_B^\eps(X)=J(X).
	$
\end{proof}

\begin{remark}\label{rem:cnj-comparison}
	The saturation result also gives a comparison with the
	von Neumann--Jordan constant. It is known that
	$
	C_{\rm NJ}(X)\leq J(X);
	$
	see, for example, \cite{Wang2010,TakahashiKato2009,YangLi2010,KatoMaligrandaTakahashi2010}. Hence Lemma~\ref{lem:saturation-bound}
	implies
	$
	C_{\rm NJ}(X)
	\leq J_B^\eps(X)
	=J(X)
	\left(
	\eps>1-\frac{J(X)}2
	\right).
	$

\end{remark}

\begin{lemma}\label{lem:correction-zero}
	Let $0<\eps<1/2$ and let $x,y\in S_X$ satisfy $d(x,y)\geq1-\eps$.  For every $r$ with $\sqrt{2\eps}<r<1$, there exist $u,v\in S_X$ such that $u\perp_Bv$ and
	\[
	\norm{x-u}<r,\qquad \norm{y-v}<2r.
	\]
	In particular,
	\[
	0\leq J_B^\eps(X)-J_B^0(X)\leq3\sqrt{2\eps}.
	\]
\end{lemma}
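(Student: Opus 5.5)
The plan is to combine the dual description of $d(x,y)$ from Lemma~\ref{lem:annihilator-distance} with a Bishop--Phelps--Bollob\'as type perturbation. Since $d(x,y)\geq1-\eps>0$, Lemma~\ref{lem:annihilator-distance} provides $f\in S_{X^*}$ with $f(y)=0$ and $f(x)=d(x,y)\geq1-\eps$. So $x$ is a unit vector at which the norm-one functional $f$ nearly attains its norm.

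The key step, and the one I expect to be the main obstacle, is the quantitative Bishop--Phelps--Bollob\'as theorem in the form: if $\norm{f}=\norm{x}=1$ and $f(x)>1-\eta$, then there exist $u\in S_X$ and $g\in S_{X^*}$ with $g(u)=1$, $\norm{x-u}<\sqrt{2\eta}$ and $\norm{f-g}<\sqrt{2\eta}$. This form holds in Banach spaces. If $X$ is not complete, I would first pass to the two-dimensional span $E$ of $x,y$, which is harmless by the reduction in Lemma~\ref{lem:annihilator-distance}. Given $r$ with $\sqrt{2\eps}<r<1$, choose $\eta$ with $\eps<\eta<r^2/2$. Then $f(x)\geq1-\eps>1-\eta$, and we obtain $u,g$ with $\norm{x-u}<r$ and $\norm{f-g}<r$. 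The care needed here is exactly in the constants and in the strict versus non-strict inequalities, which is why $r$ is taken strictly above $\sqrt{2\eps}$.

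Next, correct $y$ into the kernel of $g$. Since $f(y)=0$, we have $|g(y)|=|(g-f)(y)|<r$. Put $w=y-g(y)u$. Then $g(w)=0$ and $\norm{w-y}=|g(y)|<r$, so $\norm{w}>1-r>0$. Set $v=w/\norm{w}$. Then
\[
\norm{v-w}=\bigl|1-\norm{w}\bigr|<r,
\]
and therefore $\norm{y-v}<2r$. Orthogonality is immediate: for every $\lambda\in\R$,
\[
\norm{u+\lambda v}\geq g(u+\lambda v)=1=\norm{u},
\]
so $u\perp_B v$.

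For the final estimate, note that $\bigl|\norm{x\pm y}-\norm{u\pm v}\bigr|\leq\norm{x-u}+\norm{y-v}<3r$. Hence
\[
J_B^0(X)\geq\min\{\norm{u+v},\norm{u-v}\}>\min\{\norm{x+y},\norm{x-y}\}-3r.
\]
Taking the supremum over $B$-$\eps$-admissible pairs $(x,y)$ and letting $r\downarrow\sqrt{2\eps}$ gives $J_B^\eps(X)-J_B^0(X)\leq3\sqrt{2\eps}$. The lower bound $0\leq J_B^\eps(X)-J_B^0(X)$ is the monotonicity already proved.
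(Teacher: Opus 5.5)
Your proposal is correct and follows the paper's proof essentially step by step. Both take the annihilating functional from Lemma~\ref{lem:annihilator-distance}, apply the sharp Bishop--Phelps--Bollob\'as theorem with an intermediate parameter strictly between $\eps$ and $r^2/2$, correct $y$ to $w=y-g(y)u$ and normalize, and finish with the $3r$ perturbation of the endpoint norms. The only cosmetic difference is that the paper always works inside $E=\operatorname{span}\{x,y\}$, which handles completeness automatically, whereas you apply the theorem in $X$ and fall back to $E$ only when $X$ is incomplete.
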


\begin{proof}
	Set $E=\operatorname{span}\{x,y\}$.  By Lemma~\ref{lem:annihilator-distance}, there is $f\in S_{E^*}$ with $f(y)=0$ and $f(x)\geq1-\eps$.  Choose $\delta$ with $\eps<\delta<r^2/2$.  The quantitative Bishop--Phelps--Bollob\'as theorem \cite[Theorem~2.1]{Chica2014} provides $u\in S_E$ and $g\in S_{E^*}$ such that
	\[
	g(u)=1,\qquad \norm{u-x}<r,\qquad \norm{g-f}<r.
	\]
	Let $w=y-g(y)u$.  Then $g(w)=0$, $\norm{w-y}=|g(y)|<r$ and $\norm{w}>1-r>0$.  With $v=w/\norm{w}$,
	$
	\norm{v-y}\leq\norm{v-w}+\norm{w-y}
	=|1-\norm{w}|+\norm{w-y}<2r.
	$
	Moreover, $\norm{u+tv}\geq g(u+tv)=1$ for all $t\in\mathbb{R}$, hence $u\perp_Bv$.  Each endpoint norm changes by less than $3r$, so $\min\{\norm{x+y},\norm{x-y}\}\leq J_B^0(X)+3r$.  Take suprema and then let $r\downarrow\sqrt{2\eps}$.
\end{proof}

\begin{lemma}\label{lem:correction-positive}
	Let $0<e<\eta<1$.  Then
	\[
	0\leq J_B^\eta(X)-J_B^e(X)
	\leq \frac{3(\eta-e)}{e(1-e)}.
	\]
	If $e<1/2$, the right-hand side can be replaced by $6(\eta-e)/e$.
\end{lemma}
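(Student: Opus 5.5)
The plan is to move an $\eta$-admissible pair to an $e$-admissible pair by perturbing only the first vector, and to control the resulting change in the endpoint norms. The left inequality $0\leq J_B^\eta(X)-J_B^e(X)$ is the monotonicity proposition of Section~\ref{sec:foundations}, so only the upper bound needs work. Fix $x,y\in S_X$ with $x\perp_{B,\eta}y$, that is, $d:=d(x,y)\geq1-\eta>0$. I will produce $x'\in S_X$ with $d(x',y)\geq1-e$ and $\norm{x'-x}\leq 2(\eta-e)/e$.

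\textbf{Step 1: choose a Birkhoff-orthogonal direction.} Work in $E=\operatorname{span}\{x,y\}$. By Lemma~\ref{lem:annihilator-distance} there is $f\in S_{E^*}$ with $f(y)=0$ and $f(x)=d$. Since $E$ is finite-dimensional, $f$ attains its norm at some $u\in S_E$, so $f(u)=1$.

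\textbf{Step 2: push $x$ toward $u$ and renormalise.} For $t>0$, every $\lambda\in\R$ satisfies
\[
\norm{x+tu+\lambda y}\geq f(x+tu+\lambda y)=d+t .
\]
Hence $d(x+tu,y)\geq d+t$. We also have $1-t\leq\norm{x+tu}\leq1+t$. Put $x'=(x+tu)/\norm{x+tu}$. Then
\[
d(x',y)\geq\frac{d+t}{1+t}\geq\frac{1-\eta+t}{1+t}.
\]
The right-hand side is at least $1-e$ exactly when $te\geq\eta-e$, so I take $t=(\eta-e)/e$. Then $d(x',y)\geq 1-e$, i.e.\ $x'\perp_{B,e}y$.

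\textbf{Step 3: estimate the perturbation.} We have
\[
\norm{x'-x}\leq\norm{x'-(x+tu)}+t=\bigl|\norm{x+tu}-1\bigr|+t\leq 2t .
\]
Therefore each of $\norm{x'\pm y}$ differs from $\norm{x\pm y}$ by at most $2t$. It follows that
\[
\min\{\norm{x+y},\norm{x-y}\}\leq J_B^e(X)+\frac{2(\eta-e)}{e}.
\]

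\textbf{Step 4: conclude.} Taking the supremum over all $\eta$-admissible pairs gives
\[
J_B^\eta(X)-J_B^e(X)\leq \frac{2(\eta-e)}{e}\leq\frac{3(\eta-e)}{e(1-e)},
\]
because $1-e\leq1$. If $e<1/2$, then $1/(1-e)<2$, and the right-hand side is at most $6(\eta-e)/e$, which gives the second form of the bound.

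The only delicate point is Step 2: the perturbation direction must increase the distance to $\R y$ at a linear rate while raising the norm of $x$ by at most the same amount. Using the norming functional $f$ from Lemma~\ref{lem:annihilator-distance} achieves both at once, since $f$ is the same functional that certifies the distance $d(x,y)$. Once that choice is made, the remaining steps are routine bookkeeping.
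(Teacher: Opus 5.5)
Your proof is correct, and it gives the sharper bound $J_B^\eta(X)-J_B^e(X)\leq 2(\eta-e)/e$. This is below $3(\eta-e)/(e(1-e))$ for every $0<e<1$, so both stated forms follow.

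The key step differs from the paper's, which stays on the primal side. If $a=d(x,y)$ is below $b=1-e$, the paper slides $x$ along the line $x+\R y$ toward a nearest point $x+t_0y$. This keeps the distance to $\R y$ equal to $a$, while convexity of $t\mapsto\norm{x+ty}$ brings the norm down to exactly $a/b$; that point is located by an intermediate value argument. The paper then rescales by $b/a$ to get a unit vector at distance exactly $b$, and bounds the displacement by $(1-a/b)+2s$ using $|t_0|\leq2$.

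You instead use the functional $f$ of Lemma~\ref{lem:annihilator-distance} as a certificate. Adding $tu$ with $f(u)=\norm{u}=1$ raises the distance to $\R y$ by at least $t$ and changes the norm by at most $t$. This removes the case distinction, the intermediate value step and the bound on $t_0$.

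The two constructions are closer than they look. Since $f(y)=0$ and $\dim E=2$, every $u\in S_E$ with $f(u)=1$ has the form $(x+t_1y)/d$, where $t_1$ minimises $t\mapsto\norm{x+ty}$. So your $x'$ is again the normalisation of a point on the segment from $x$ to a nearest point of $x+\R y$.

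The paper's version needs no dual object, only convexity along a line. Yours is shorter, and its sharper constant would, for instance, replace $6h/e$ by $2h/e$ in the proof of Theorem~\ref{thm:profile-continuity}.
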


\begin{proof}
	Take an $\eta$-admissible pair $x,y\in S_X$ and write $a=d(x,y)$ and $b=1-e$.  If $a\geq b$, no correction is needed.  Otherwise $1-\eta\leq a<b<1$.  Choose $t_0$ with $\norm{x+t_0y}=a$; the reverse triangle inequality gives $|t_0|\leq1+a\leq2$.  Put
	$
	s=\frac{1-a/b}{1-a}=\frac{b-a}{b(1-a)}\in(0,1).
	$
	Convexity gives $\norm{x+st_0y}\leq1-s+sa=a/b$.  By continuity there is $s_0\in[0,s]$ with $\norm{x+s_0t_0y}=a/b$.  Set $w=x+s_0t_0y$ and $z=(b/a)w$.  Translation along $\mathbb{R}y$ leaves the distance unchanged, so $\norm{z}=1$ and $d(z,y)=b$.  Furthermore,
	\[
	\begin{aligned}
		\norm{z-x}
		&\leq \norm{z-w}+\norm{w-x}\\
		&\leq 1-\frac ab+2s
		=\frac{b-a}{b}\left(1+\frac{2}{1-a}\right)
		\leq\frac{3(\eta-e)}{e(1-e)}.
	\end{aligned}
	\]
	Here $b-a\leq\eta-e$ and $1-a>e$.  Each endpoint norm changes by at most $\norm{z-x}$, and taking suprema gives the result.
\end{proof}

\begin{theorem}\label{thm:profile-continuity}
	For every real normed space $X$ with $\dim X\geq2$, the function
	\[
	\eps\longmapsto J_B^\eps(X)
	\]
	is continuous on $[0,1)$ and locally Lipschitz on $(0,1)$.  Uniformly in $X$, for $0\leq e\leq\eta<1$,
	\[
	0\leq J_B^\eta(X)-J_B^e(X)
	\leq\min\{1,6(\eta-e)^{1/3}\}.
	\]
	At zero the sharper estimate of Lemma~\ref{lem:correction-zero} holds.  The profile extends continuously to $\eps=1$ by the value $J(X)$, and
	\[
	J_B^\eps(X)=J(X)
	\qquad\left(1-\frac{J(X)}2\leq\eps\leq1\right).
	\]
\end{theorem}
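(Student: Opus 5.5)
We assemble the statement from Lemmas~\ref{lem:saturation-bound}, \ref{lem:correction-zero} and \ref{lem:correction-positive}, together with the bounds $1\leq J_B^\eps(X)\leq J(X)\leq2$ and monotonicity in $\eps$. The plan is to prove the uniform Hölder-type estimate first, then deduce local Lipschitz continuity, then treat the saturation range and the extension to $\eps=1$.

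\textbf{Uniform estimate.} Fix $0\leq e\leq\eta<1$ and put $h=\eta-e$. The bound by $1$ is immediate, since $J_B^\eta(X)-J_B^e(X)\leq J(X)-J_\perp(X)\leq 2-1$. For the $h^{1/3}$ bound we split into cases.
\begin{itemize}
\item[(a)] Suppose $e\geq 1-J(X)/2$, which includes every $e\geq 1-1/\sqrt2$. Then both values equal $J(X)$ by Lemma~\ref{lem:saturation-bound} (the endpoint itself is handled below), so the difference is $0$.
\item[(b)] Suppose $e\leq h^{1/3}$. By monotonicity, $J_B^\eta-J_B^e\leq J_B^\eta-J_B^0$. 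If $\eta<1/2$, Lemma~\ref{lem:correction-zero} gives $J_B^\eta-J_B^0\leq 3\sqrt{2\eta}$, and $\eta\leq h^{1/3}+h\leq 2h^{1/3}$ (using $h<1$). Hence the difference is at most $3\sqrt{4h^{1/3}}=6h^{1/6}$. This exponent is worse than claimed, so the split must be tuned more carefully. The cleaner route is to compare through an intermediate point. If $e<h^{1/3}$, first use Lemma~\ref{lem:correction-zero} at level $\eta$ only when $\eta$ is small; otherwise $h$ is bounded below, and then $6h^{1/3}\geq1$ already follows from the trivial bound.
\item[(c)] Suppose $e>h^{1/3}$ and $e<1/2$. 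Lemma~\ref{lem:correction-positive} gives $6h/e<6h^{2/3}\leq 6h^{1/3}$.
\item[(d)] Suppose $1/2\leq e<1-J(X)/2\leq 1-1/\sqrt2$. This is impossible, since $1-1/\sqrt2<1/2$, so case (a) covers it.
\end{itemize}
The main obstacle is the bookkeeping in case (b): choosing a threshold $e\lessgtr h^{\alpha}$ so that the square-root estimate at $0$ and the $h/e$ estimate balance. We balance $\sqrt{e+h}$ against $h/e$. With $e\sim h^{2/3}$, the first term is about $h^{1/3}$ and the second is $h^{1/3}$. So the threshold should be $e=h^{2/3}$ rather than $h^{1/3}$. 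In case (b) this gives $\eta\leq h^{2/3}+h\leq 2h^{2/3}$ and $3\sqrt{2\eta}\leq 6h^{1/3}$. In case (c) it gives $6h/e\leq6h^{1/3}$. When $\eta\geq1/2$ in case (b), we have $h\geq$ a fixed constant, so $6h^{1/3}\geq1$ and the trivial bound suffices. We will check these constants explicitly.

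\textbf{Continuity and local Lipschitz.} Continuity on $[0,1)$ follows from the uniform estimate together with monotonicity. On any compact $[a,b]\subset(0,1)$, Lemma~\ref{lem:correction-positive} gives the Lipschitz constant $3/(a(1-b))$, so the profile is locally Lipschitz on $(0,1)$. The sharper estimate at zero is Lemma~\ref{lem:correction-zero} itself.

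\textbf{Saturation range and extension to $\eps=1$.} For $\eps>1-J(X)/2$, Lemma~\ref{lem:saturation-bound} gives $J_B^\eps(X)=J(X)$. For the closed endpoint $\eps=1-J(X)/2$ we use continuity: the profile is continuous and constantly $J(X)$ to the right of this point, so it equals $J(X)$ there as well. Defining the value $J(X)$ at $\eps=1$ is consistent with Proposition~\ref{pop:recovery}, and the extended profile is constant near $1$, hence continuous on $[0,1]$.
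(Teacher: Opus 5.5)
Your argument is correct and follows the paper's proof. Both use the same three-way split at $e=h^{2/3}$ (Lemma~\ref{lem:correction-zero} plus monotonicity below it, Lemma~\ref{lem:correction-positive} above it, saturation for $e\geq 1/2$), take the local Lipschitz constant from Lemma~\ref{lem:correction-positive}, and obtain the closed threshold by continuity. The one divergence is that your continuity argument at $\eps=1-J(X)/2$ also covers the case $J(X)=2$, via right-continuity at $0$, whereas the paper invokes $J_\perp(X)\geq 2J(X)-2$ there. In a final write-up, drop the abandoned $h^{1/3}$ threshold, and state case (a) only for $e\geq 1/2$ so that its use of the open saturation lemma is visibly non-circular.
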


\begin{proof}
	Monotonicity and Lemma~\ref{lem:correction-zero} give right continuity at zero, while Lemma~\ref{lem:correction-positive} gives local Lipschitz continuity at every positive parameter.  The closed saturation threshold follows from Lemma~\ref{lem:saturation-bound} by continuity when $J(X)<2$.  If $J(X)=2$, the inequality $J_\perp(X)\geq2J(X)-2$ gives $J_B^0(X)=2$.
	
	It remains to prove the common modulus.  Put $h=\eta-e>0$.  Since $1\leq J_B^\eps(X)\leq2$, the difference is at most $1$.  If $h\geq1/8$ there is nothing further to prove.  Assume $h<1/8$.  If $e\geq1/2$, both parameters lie in the constant part of the profile.  If $h^{2/3}<e<1/2$, Lemma~\ref{lem:correction-positive} gives
	$
	J_B^\eta(X)-J_B^e(X)\leq\frac{6h}{e}<6h^{1/3}.
	$
	Finally, if $e\leq h^{2/3}$, then $\eta\leq h^{2/3}+h<\tfrac32h^{2/3}<1/2$, and Lemma~\ref{lem:correction-zero} together with monotonicity yields
	$
	J_B^\eta(X)-J_B^e(X)
	\leq J_B^\eta(X)-J_B^0(X)
	\leq3\sqrt{2\eta}<6h^{1/3}.
	$
	This proves the uniform estimate and the extension to $\eps=1$.
\end{proof}

Although approximate Birkhoff--James orthogonality is originally
defined for $0\leq\eps<1$, the endpoint $\eps=1$ admits a natural
interpretation. Indeed, when $\eps=1$,
\[
\norm{x+\lambda y}
\geq
(1-\eps)\norm{x}
=
0
\qquad
(\lambda\in\mathbb R),
\]
so every pair $x,y\in S_X$ is admissible. Hence
$
J_B^1(X)
=
\sup_{x,y\in S_X} \min\{\norm{x+y},\norm{x-y}\}
=
J(X).
$
On the other hand, Theorem~\ref{thm:profile-continuity} gives
$
\displaystyle\lim_{\eps\to1^-}J_B^\eps(X)=J(X).
$
Thus the assignment
$
J_B^1(X)=J(X)
$
is both the natural endpoint value of the defining extremal problem
and the continuous extension of the Birkhoff profile to $[0,1]$.

Since the Birkhoff profile is nondecreasing and extends continuously to
the endpoint value $J_B^1(X)=J(X)$, it is natural to ask when it first
reaches the classical James constant. This leads to the following
threshold parameter.
\begin{definition}\label{def:birkhoff-saturation}
	The \emph{Birkhoff saturation parameter} of $X$ is defined by
	\[
	\sigma_B(X)
	=
	\inf\left\{
	\eps\in[0,1]:
	J_B^\eps(X)=J(X)
	\right\}.
	\]
\end{definition}

For $1<r<J(X)$, put
\[
D_X(r)
=
\sup\left\{
d(x,y):
x,y\in S_X,\ \min\{\norm{x+y},\norm{x-y}\}>r
\right\}.
\]
Since the sets in the above supremum decrease as $r$ increases,
the function $D_X$ is nonincreasing and therefore
$\displaystyle\lim_{r\uparrow J(X)}D_X(r)$ exists.

\begin{theorem}\label{thm:birkhoff-saturation-parameter}
	Let $X$ be a real normed space with $\dim X\geq2$. Then
	\[
	\left\{
	\eps\in[0,1]:
	J_B^\eps(X)=J(X)
	\right\}
	=
	[\sigma_B(X),1].
	\]
	Moreover,
	\[
	\sigma_B(X)
	=
	1-
	\lim_{r\uparrow J(X)}
	\sup\left\{
	d(x,y):
	x,y\in S_X,\ \min\{\norm{x+y},\norm{x-y}\}>r
	\right\}.
	\]
	The saturation parameter satisfies
	\[
	\frac{\bigl(J(X)-J_\perp(X)\bigr)^2}{18}
	\leq
	\sigma_B(X)
	\leq
	1-\frac{J(X)}2
	\leq
	1-\frac1{\sqrt2}.
	\]
	If $X$ is finite dimensional, then
	\[
	\sigma_B(X)
	=
	1-
	\max\left\{
	d(x,y):
	x,y\in S_X,\ \min\{\norm{x+y},\norm{x-y}\}=J(X)
	\right\}.
	\]
\end{theorem}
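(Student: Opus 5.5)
The plan is to handle the four assertions in order: the interval structure, the limit formula for $\sigma_B$, the two-sided bounds, and the finite-dimensional refinement.

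\emph{Interval structure.} By monotonicity, the saturation set $S=\{\eps\in[0,1]: J_B^\eps(X)=J(X)\}$ is an up-set: if $\eps\in S$ and $\eps'\geq\eps$, then $J(X)=J_B^\eps(X)\le J_B^{\eps'}(X)\le J(X)$. By Theorem~\ref{thm:profile-continuity}, the profile is continuous on $[0,1]$ with $J_B^1(X)=J(X)$. Hence $S$ is a closed, nonempty up-set, so $S=[\sigma_B(X),1]$.

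\emph{Limit formula.} Put $L=\lim_{r\uparrow J(X)}D_X(r)$. If $J(X)=2$, the estimate $J_\perp\ge 2J-2$ forces $\sigma_B=0$; then $L=1$ should follow by Lemma~\ref{lem:correction-zero}-type approximation by exact $B$-orthogonal pairs, and I would treat this case separately. So assume the $r$-range is nonempty, which holds since $J(X)\ge\sqrt2>1$.

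For $\sigma_B\le 1-L$: take $\eps>1-L$. Then $D_X(r)\ge L>1-\eps$ for every $r<J(X)$, so for each $r$ there is a pair with $\min>r$ and $d>1-\eps$. That pair is $\eps$-admissible, so $J_B^\eps\ge r$; letting $r\uparrow J(X)$ gives $\eps\in S$.

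For $\sigma_B\ge 1-L$: take $\eps<1-L$. Then for some $r_0<J(X)$ we have $D_X(r_0)<1-\eps$, so every pair with $\min>r_0$ violates admissibility. Hence $J_B^\eps(X)\le r_0<J(X)$ and $\eps\notin S$.

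\emph{Bounds.}
\begin{itemize}
\item The upper bound $\sigma_B\le 1-J/2$ is the closed threshold in Theorem~\ref{thm:profile-continuity}, and $1-J/2\le 1-1/\sqrt2$ because $J\ge\sqrt2$.
\item For the lower bound, set $\sigma=\sigma_B$. If $\sigma\ge1/2$ the bound is trivial, since the left side is at most $1/18$. Otherwise apply Lemma~\ref{lem:correction-zero} at $\eps=\sigma$, or at $\eps\downarrow\sigma$ using continuity. This gives $J-J_\perp=J_B^\sigma-J_B^0\le 3\sqrt{2\sigma}$, and squaring yields $(J-J_\perp)^2\le 18\sigma$. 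The case $\sigma=0$ also follows by continuity at $0$.
\end{itemize}

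\emph{Finite dimensions.} In finite dimensions $S_X$ is compact, and $(x,y)\mapsto d(x,y)$ is continuous because it is $2$-Lipschitz in each variable on the sphere (via the minimizer bound $|t_0|\le2$). Therefore the set $K=\{(x,y):\min=J(X)\}$ is compact and nonempty, and $M=\max_K d$ exists.

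I claim $L=M$.
\begin{itemize}
\item Each pair in $K$ has $\min>r$ for every $r<J(X)$, so $D_X(r)\ge M$ and hence $L\ge M$.
\item Conversely, choose pairs $(x_n,y_n)$ with $\min>r_n\uparrow J(X)$ and $d(x_n,y_n)>D_X(r_n)-1/n$. A subsequence converges to some $(x,y)\in K$, and continuity of $d$ gives $d(x,y)\ge L$.
\end{itemize}

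The main obstacle I anticipate is the degenerate case $J(X)=2$ and, more generally, making sure the convention for the limit is consistent there. I would fall back on Lemma~\ref{lem:saturation-bound}, which gives $d\ge\min/2$, together with the already-established equality $J_B^0=2$.
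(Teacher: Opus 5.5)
Your proposal is correct and follows the paper's proof essentially step by step. You use monotonicity plus the continuity of Theorem~\ref{thm:profile-continuity} for the interval structure, the same two-sided argument with $D_X(r)$ for the limit formula, and the closed threshold of Theorem~\ref{thm:profile-continuity} together with Lemma~\ref{lem:correction-zero} at $\eps=\sigma_B(X)$ for the bounds.

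The only variation is in finite dimensions. There you identify $\lim_{r\uparrow J(X)}D_X(r)$ with the maximum of $d$ over James-extremal pairs by a subsequence argument and then invoke the limit formula, whereas the paper argues directly that the supremum defining $J_B^\eps(X)$ is attained on the compact admissible set. Both routes work. Your separate worry about $J(X)=2$ is unnecessary, since the general two-sided argument never uses $J(X)<2$.
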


\begin{proof}
	By Proposition~\ref{prop:isosc-collapse} and
	Theorem~\ref{thm:profile-continuity}, the function
	$
	\eps\mapsto J_B^\eps(X)
	$
	is continuous and nondecreasing on $[0,1]$, where
	$J_B^1(X)=J(X)$. Hence
	$
	\left\{
	\eps\in[0,1]:
	J_B^\eps(X)=J(X)
	\right\}
	$
	is a nonempty closed upper interval. Therefore it is equal to
	$[\sigma_B(X),1]$.
	
	Set
	$
	D_*=
	\displaystyle\lim_{r\uparrow J(X)}D_X(r).
	$
	We claim that
	$
	\sigma_B(X)=1-D_*.
	$
	
	Let $\eps>1-D_*$. Then
	$
	1-\eps<D_*\leq D_X(r)
	\qquad
	(1<r<J(X)).
	$
	By the definition of $D_X(r)$, for every $r<J(X)$ there exist
	$x,y\in S_X$ such that
	\[
	\min\{\norm{x+y},\norm{x-y}\}>r
	\qquad\text{and}\qquad
	d(x,y)>1-\eps.
	\]
	Thus $x\perp_{B,\eps}y$, and hence
	$
	J_B^\eps(X)\geq r.
	$
	Since this holds for every $r<J(X)$,
	$
	J_B^\eps(X)=J(X).
	$
	It follows that
	$
	\sigma_B(X)\leq1-D_*.
	$
	
	Conversely, let $\eps<1-D_*$. Then
	$
	1-\eps>D_*.
	$
	Since $D_X(r)$ decreases to $D_*$ as $r\uparrow J(X)$, there exists
	$r_0<J(X)$ such that
	$
	D_X(r_0)<1-\eps.
	$
	Consequently, no pair $x,y\in S_X$ satisfying
	$d(x,y)\geq1-\eps$ can have $\min\{\norm{x+y},\norm{x-y}\}>r_0$. Therefore
	$
	J_B^\eps(X)\leq r_0<J(X),
	$
	and hence $\eps<\sigma_B(X)$. Thus
	$
	\sigma_B(X)\geq1-D_*,
	$
	which proves
	$
	\sigma_B(X)
	=
	1-
	\displaystyle\lim_{r\uparrow J(X)}D_X(r).
	$
	
	The upper estimate follows directly from
	Theorem~\ref{thm:profile-continuity}:
	$
	\sigma_B(X)
	\leq
	1-\frac{J(X)}2.
	$
	Since $J(X)\geq\sqrt2$,
	$
	\sigma_B(X)
	\leq
	1-\frac1{\sqrt2}
	<
	\frac12.
	$
	
	We next prove the lower estimate. If $\sigma_B(X)=0$, then
	$
	J_\perp(X)
	=
	J_B^0(X)
	=
	J(X),
	$
	and the assertion is immediate. Suppose therefore that
	$\sigma_B(X)>0$. Since
	$
	0<\sigma_B(X)<\frac12
	$
	and
	$
	J_B^{\sigma_B(X)}(X)=J(X),
	$
	Lemma~\ref{lem:correction-zero} gives
	$
	J(X)-J_\perp(X)
	=
	J_B^{\sigma_B(X)}(X)-J_B^0(X)
	\leq
	3\sqrt{2\sigma_B(X)}.
	$
	Squaring both sides yields
	$
	\sigma_B(X)
	\geq
	\frac{\bigl(J(X)-J_\perp(X)\bigr)^2}{18}.
	$
	
	Finally, assume that $X$ is finite dimensional. Set
	\[
	\mathcal E_J(X)
	=
	\left\{
	(x,y)\in S_X\times S_X:
	\min\{\norm{x+y},\norm{x-y}\}=J(X)
	\right\}.
	\]
	Since $S_X\times S_X$ is compact and $m$ is continuous,
	$\mathcal E_J(X)$ is nonempty and compact. Moreover, $d$ is continuous
	on $S_X\times S_X$. Indeed, for $x,y\in S_X$,
	$
	d(x,y)
	=
	\min_{|t|\leq2}\norm{x+ty},
	$
	because
	\[
	\norm{x+ty}\geq |t|-1>1
	\qquad
	\text{whenever } |t|>2,
	\]
	whereas $d(x,y)\leq\norm{x}=1$.
	Hence the maximum
	$
	D_J
	=
	\max\left\{
	d(x,y):
	(x,y)\in\mathcal E_J(X)
	\right\}
	$
	is attained.
	
	Choose $(x_0,y_0)\in\mathcal E_J(X)$ such that
	$d(x_0,y_0)=D_J$. If
	$
	\eps\geq1-D_J,
	$
	then
	$
	d(x_0,y_0)\geq1-\eps,
	$
	so $(x_0,y_0)$ is $B$-$\eps$-orthogonal. Since
	$
	m(x_0,y_0)=J(X),
	$
	we obtain
	$
	J_B^\eps(X)=J(X).
	$
	Therefore
	$
	\sigma_B(X)\leq1-D_J.
	$
	
	Conversely, suppose that $\eps<1-D_J$. If
	$J_B^\eps(X)=J(X)$, then the admissible set
	$
	\left\{
	(x,y)\in S_X\times S_X:
	d(x,y)\geq1-\eps
	\right\}
	$
	is compact, so the supremum defining $J_B^\eps(X)$ is attained.
	Thus there exist $x,y\in S_X$ such that
	\[
	\min\{\norm{x+y},\norm{x-y}\}=J(X)
	\qquad\text{and}\qquad
	d(x,y)\geq1-\eps>D_J,
	\]
	contradicting the definition of $D_J$. Hence
	$
	J_B^\eps(X)<J(X),
	$
	and therefore
	$
	\sigma_B(X)\geq1-D_J.
	$
	Consequently,
	$
	\sigma_B(X)
	=
	1-
	\max\left\{
	d(x,y):
	x,y\in S_X,\ \min\{\norm{x+y},\norm{x-y}\}=J(X)
	\right\}.
	$
\end{proof}

We first recall the notion of normal structure. A bounded convex subset
$C$ of a Banach space $X$ is said to have \emph{normal structure} if
every bounded convex subset $H\subseteq C$ with more than one point
contains a point $x_0\in H$ such that
$
\sup_{y\in H}\norm{x_0-y}<\diam(H).
$
The space $X$ is said to have \emph{uniform normal structure} if there
exists a constant $\lambda<1$ such that, for every nonempty bounded
closed convex subset $C\subseteq X$ with $\diam(C)>0$,
$
r(C)\leq\lambda\,\diam(C),
$
where
$
r(C)=\inf_{x\in C}\sup_{y\in C}\norm{x-y}.
$
Normal structure was introduced by Brodski\u{\i} and Milman
\cite{BrodskiiMilman1948}; uniform normal structure is a quantitative
strengthening that plays an important role in the fixed point theory of
nonexpansive mappings; see, for example, \cite{Kato2001}.

The preceding saturation result also yields sufficient conditions for
uniform normal structure in terms of the two approximate endpoint
constants. Recall that a theorem of Dhompongsa, Kaewkhao and Tasena
\cite{DhompongsaKaewkhaoTasena2003} asserts that
$
J(X)<\frac{1+\sqrt5}{2}
$
implies that $X$ has uniform normal structure.

\begin{corollary}\label{cor:uniform-normal-structure}
	Let $X$ be a real Banach space with $\dim X\geq2$, and put
	\[
	\varphi=\frac{1+\sqrt5}{2},
	\qquad
	\varepsilon_c=\frac{3-\sqrt5}{4}.
	\]
	If $\varepsilon_c\leq\eps_0<1$, then
	\[
	J_B^{\eps_0}(X)<\varphi
	\quad\Longleftrightarrow\quad
	J(X)<\varphi.
	\]
	Consequently, $J_B^{\eps_0}(X)<\varphi$ implies that $X$ has uniform
	normal structure.
\end{corollary}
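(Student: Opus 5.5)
The plan is to notice that the critical parameter is exactly the saturation threshold of Lemma~\ref{lem:saturation-bound} evaluated at the golden ratio. We have
\[
1-\frac{\varphi}{2}=1-\frac{1+\sqrt5}{4}=\frac{3-\sqrt5}{4}=\varepsilon_c .
\]
Once this is in place, the equivalence follows from monotonicity together with the closed saturation statement in Theorem~\ref{thm:profile-continuity}.

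The implication ``$\Leftarrow$'' is immediate. Since every $B$-$\eps_0$-admissible pair lies in $S_X\times S_X$, we have $J_B^{\eps_0}(X)\leq J(X)$. Hence $J(X)<\varphi$ gives $J_B^{\eps_0}(X)<\varphi$.

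For ``$\Rightarrow$'' I argue by contraposition, so assume $J(X)\geq\varphi$. Then
\[
1-\frac{J(X)}{2}\leq 1-\frac{\varphi}{2}=\varepsilon_c\leq\eps_0 .
\]
Thus $\eps_0$ lies in the closed saturation range $\bigl[1-J(X)/2,\,1\bigr]$ of Theorem~\ref{thm:profile-continuity}, which gives $J_B^{\eps_0}(X)=J(X)\geq\varphi$. Equivalently, $\eps_0\geq\sigma_B(X)$ by the bound $\sigma_B(X)\leq1-J(X)/2$ in Theorem~\ref{thm:birkhoff-saturation-parameter}.

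The only delicate point is the boundary case $\eps_0=1-J(X)/2$, which can only occur when $J(X)=\varphi$ and $\eps_0=\varepsilon_c$. Lemma~\ref{lem:saturation-bound} alone covers only the strict inequality $\eps>1-J(X)/2$, so here I rely on the closed version in Theorem~\ref{thm:profile-continuity}. That version comes from continuity of the profile when $J(X)<2$. The case $J(X)=2$ is handled there through $J_\perp(X)\geq 2J(X)-2$, although that case has $1-J(X)/2=0$ and is trivial anyway. If one prefers to avoid the continuity theorem, an alternative is to use the strict lemma at every $\eps\in(1-J(X)/2,1)$. For the boundary value one would then invoke continuity of $\eps\mapsto J_B^\eps(X)$ at $\eps_0$ and pass to the limit.

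Finally, suppose $J_B^{\eps_0}(X)<\varphi$. The equivalence just proved gives $J(X)<\varphi=\frac{1+\sqrt5}{2}$, and the theorem of Dhompongsa, Kaewkhao and Tasena \cite{DhompongsaKaewkhaoTasena2003} then yields uniform normal structure for the Banach space $X$.
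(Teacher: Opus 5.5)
Your proposal is correct and is essentially the paper's own argument. It uses the identity $1-\varphi/2=\varepsilon_c$, the easy direction from $J_B^{\eps_0}(X)\leq J(X)$, the closed saturation range of Theorem~\ref{thm:profile-continuity} for the converse, and the Dhompongsa--Kaewkhao--Tasena criterion for uniform normal structure. Your observation that the closed (rather than strict) saturation statement is needed only in the boundary case $J(X)=\varphi$, $\eps_0=\varepsilon_c$, where right-continuity of the profile at the positive parameter $\varepsilon_c$ supplies it, is accurate and slightly sharpens the paper's presentation.
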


\begin{proof}
	The implication $J(X)<\varphi \Longrightarrow J_B^{\eps_0}(X)<\varphi$
	follows from $J_B^{\eps_0}(X)\leq J(X)$.
	
	Conversely, suppose that $J_B^{\eps_0}(X)<\varphi$ for some
	$\eps_0\geq\varepsilon_c$, and assume that $J(X)\geq\varphi$. Then
	\[
	1-\frac{J(X)}2
	\leq
	1-\frac{\varphi}{2}
	=
	\frac{3-\sqrt5}{4}
	=
	\varepsilon_c
	\leq\eps_0.
	\]
	Hence Theorem~\ref{thm:profile-continuity} yields
	$J_B^{\eps_0}(X)=J(X)\geq\varphi$, a contradiction. Therefore
	$J(X)<\varphi$.
	
	The final assertion follows from the James-constant criterion of
	Dhompongsa, Kaewkhao and Tasena.
\end{proof}

\medskip
Having established continuity with respect to the approximation parameter, we now study how the same profile behaves under perturbations of the ambient norm.  This leads to a parameter-transport principle and explicit Banach--Mazur stability estimates.

For isomorphic real normed spaces set
$
d_{BM}(X,Y)=\inf\{\norm{T}\norm{T^{-1}}:T:X\to Y\text{ is an isomorphism onto }Y\}.
$
Write
$
\omega(h)=\min\{1,6h^{1/3}\}.
$

\begin{theorem}\label{thm:BM-stability}
	Let $D\geq d_{BM}(X,Y)$ and $D\geq1$.  For $0\leq\eps<1$, put
	$
	\eta=1-\frac{1-\eps}{D}.
	$
	Then
	$
	J_B^\eps(X)+1\leq D\bigl(J_B^\eta(Y)+1\bigr),
	$
	and the same inequality holds with $X$ and $Y$ interchanged.  Consequently,
	\[
	\sup_{0\leq\eps<1}|J_B^\eps(X)-J_B^\eps(Y)|
	\leq 3(D-1)+D\,\omega(1-D^{-1}).
	\]
	For $1\leq D<2$,
	\[
	|J_\perp(X)-J_\perp(Y)|
	\leq3(D-1)+3\sqrt{2D(D-1)}.
	\]
	At each fixed $0<\eps<1/2$,
	\[
	|J_B^\eps(X)-J_B^\eps(Y)|
	\leq\left(3+\frac6\eps\right)(D-1),
	\]
	while for $1/2\leq\eps<1$ the right-hand side can be replaced by $3(D-1)$.
\end{theorem}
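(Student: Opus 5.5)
The plan is a direct transport argument. Fix $D'>D$ (or $D'=D$ if the infimum is attained), and pick an isomorphism $T:X\to Y$ with $\norm{T}\norm{T^{-1}}\leq D'$. Rescale so that $\norm{T^{-1}}=1$ and $\norm{T}\leq D'$. Then $\norm{z}\leq\norm{Tz}\leq D'\norm{z}$ for all $z\in X$.

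Let $x,y\in S_X$ with $x\perp_{B,\eps}y$, i.e.\ $d(x,y)\geq1-\eps$. Write $Tx=\alpha u$ and $Ty=\beta v$ with $u,v\in S_Y$ and $\alpha,\beta\in[1,D']$.

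\emph{Admissibility of $(u,v)$.} For every $\lambda\in\R$,
\[
\norm{\alpha u+\lambda\beta v}=\norm{T(x+\lambda y)}\geq\norm{x+\lambda y}\geq1-\eps .
\]
Dividing by $\alpha$ and reparametrizing $\lambda$ gives $d(u,v)\geq(1-\eps)/D'$.

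\emph{Endpoint comparison.} We have
\[
\norm{x\pm y}\leq\norm{\alpha u\pm\beta v}\leq\alpha\norm{u\pm v}+|\beta-\alpha|\leq D'\norm{u\pm v}+(D'-1).
\]
Taking minima gives $\min\{\norm{x+y},\norm{x-y}\}+1\leq D'\bigl(\min\{\norm{u+v},\norm{u-v}\}+1\bigr)$.

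Taking the supremum and letting $D'\downarrow D$ requires $\eta$ to be stable under this limit. I would handle this with the continuity of the profile (Theorem~\ref{thm:profile-continuity}): apply the inequality with $\eta'=1-(1-\eps)/D'$ and let $\eta'\to\eta$. The result is $J_B^\eps(X)+1\leq D\bigl(J_B^\eta(Y)+1\bigr)$. Using $T^{-1}$ instead gives the symmetric inequality. This limiting step is the only delicate point; the rest is bookkeeping.

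For the consequences, rewrite the inequality as
\[
J_B^\eps(X)\leq J_B^\eta(Y)+(D-1)\bigl(J_B^\eta(Y)+1\bigr)\leq J_B^\eta(Y)+3(D-1),
\]
using $J_B^\eta(Y)\leq2$. It then remains to bound $J_B^\eta(Y)-J_B^\eps(Y)$, where
\[
\eta-\eps=(1-\eps)(1-D^{-1})\leq\min\{1-D^{-1},\,D-1\}.
\]
\begin{itemize}
\item[(a)] \emph{Uniform bound.} Theorem~\ref{thm:profile-continuity} and monotonicity of $\omega$ give a bound $\omega(1-D^{-1})\leq D\,\omega(1-D^{-1})$.
\item[(b)] \emph{The case $\eps=0$, $D<2$.} Here $\eta=1-D^{-1}<1/2$, so Lemma~\ref{lem:correction-zero} gives $3\sqrt{2(D-1)/D}\leq3\sqrt{2D(D-1)}$.
\item[(c)] \emph{The case $0<\eps<1/2$.} Lemma~\ref{lem:correction-positive} with $e=\eps$ gives $6(\eta-\eps)/\eps\leq6(D-1)/\eps$.
\item[(d)] \emph{The case $\eps\geq1/2$.} Then $\eta\geq\eps\geq1/2>1-1/\sqrt2\geq\sigma_B(Y)$, so by Theorem~\ref{thm:birkhoff-saturation-parameter} both values equal $J(Y)$ and the correction vanishes.
\end{itemize}

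Combining each case with the symmetric inequality for $X$ and $Y$ interchanged yields the stated two-sided bounds.
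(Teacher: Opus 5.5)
Your proposal is correct and follows essentially the same route as the paper: you transport an $\eps$-admissible pair through a normalized isomorphism, verify $\eta$-admissibility and the affine endpoint bound, pass to the limit $D'\downarrow D$ via Theorem~\ref{thm:profile-continuity}, and then control $J_B^\eta(Y)-J_B^\eps(Y)$ with Theorem~\ref{thm:profile-continuity}, Lemma~\ref{lem:correction-zero}, Lemma~\ref{lem:correction-positive} and saturation. The only difference is bookkeeping: you write $D(J+1)-1=J+(D-1)(J+1)$, which gives the correction term without the paper's factor $D$, so your intermediate bounds are marginally sharper before you weaken them to the stated forms.
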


\begin{proof}
	First suppose that an isomorphism $T:X\to Y$ has distortion at most $D$.  After rescaling,
	\[
	\norm{z}_X\leq\norm{Tz}_Y\leq D\norm{z}_X\qquad(z\in X).
	\]
	For an $\eps$-admissible pair $x,y\in S_X$, set
	\[
	a=\norm{Tx}_Y,\quad b=\norm{Ty}_Y,\quad
	u=\frac{Tx}{a},\quad v=\frac{Ty}{b},\quad c=\max\{a,b\}.
	\]
	Then $1\leq a,b\leq D$ and
	\[
	\operatorname{dist}_Y(u,\mathbb{R}v)
	=\frac{\operatorname{dist}_Y(Tx,\mathbb{R}Ty)}{a}
	\geq\frac{d_X(x,y)}{D}
	\geq\frac{1-\eps}{D}=1-\eta.
	\]
	Thus $(u,v)$ is $\eta$-admissible in $Y$.  Normalizing both images introduces an endpoint error at most $|a-b|/c$.  For either sign,
	\[
	\begin{aligned}
		\norm{u\pm v}_Y
		&\geq\frac{\norm{T(x\pm y)}_Y-|a-b|}{c}\\
		&\geq\frac{\norm{x\pm y}_X-c+1}{c}
		=\frac{\norm{x\pm y}_X+1}{c}-1\\
		&\geq\frac{\norm{x\pm y}_X+1}{D}-1.
	\end{aligned}
	\]
	Taking endpoint minima and suprema gives
	$
	J_B^\eps(X)+1\leq D\bigl(J_B^\eta(Y)+1\bigr)
	$
	when the distortion is realized.  The general case follows from distortions decreasing to $d_{BM}(X,Y)$ and Theorem~\ref{thm:profile-continuity}.
	
	Subtracting $J_B^\eps(Y)$ and using $J_B^\eps(Y)\leq2$ gives
	\[
	J_B^\eps(X)-J_B^\eps(Y)
	\leq3(D-1)+D\bigl(J_B^\eta(Y)-J_B^\eps(Y)\bigr).
	\]
	Since $0\leq\eta-\eps\leq1-D^{-1}$, Theorem~\ref{thm:profile-continuity} and symmetry yield the uniform estimate.  At $\eps=0$, $\eta=1-D^{-1}<1/2$ for $D<2$, and Lemma~\ref{lem:correction-zero} gives the square-root bound.  For $0<\eps<1/2$, Lemma~\ref{lem:correction-positive} bounds the last difference by $6(\eta-\eps)/\eps$, yielding the fixed-parameter estimate.  When $\eps\geq1/2$, both parameters lie in the constant part of the profile.
\end{proof}

\section{Examples and Exact Computations}\label{sec:examples}

We conclude with examples illustrating the preceding theory. We first
record the exact values in Hilbert and $L_p$ spaces, and then turn to
finite-dimensional examples for which the Birkhoff profile is genuinely
nonconstant.

\begin{example}\label{ex:hilbert}
	Let $H$ be a real Hilbert space with $\dim H\geq2$. Then, for every
	$0\leq\eps<1$,
	\[
	J_I^\eps(H)
	=
	J_B^\eps(H)
	=
	J(H)
	=
	\sqrt2.
	\]
\end{example}

\begin{proof}
	Let $x,y\in S_H$ and write $c=\langle x,y\rangle$.  Then
	$
	\norm{x+y}^2=2+2c,
	\norm{x-y}^2=2-2c.
	$
	Thus
	$
	\min\{\norm{x+y},\norm{x-y}\}^2=2-2|c|\leq2,
	$
	so $J(H)\leq\sqrt2$.  Choosing $x,y\in S_H$ with $\langle x,y\rangle=0$ gives
	$
	\norm{x+y}=\norm{x-y}=\sqrt2,
	$
	so $J(H)=\sqrt2$.  The same pair satisfies exact isosceles orthogonality and exact Birkhoff-James orthogonality; hence it is admissible for $J_I^\eps(H)$ and $J_B^\eps(H)$ for every $\eps$.  The upper bounds $J_I^\eps(H),J_B^\eps(H)\leq J(H)$ complete the proof.
\end{proof}

We next consider $L_p$ and $\ell_p$ spaces, where the relevant extremal pairs are furnished by the classical Clarkson--Hanner geometry.

For $1<p<\infty$ let $q$ be the conjugate exponent, $1/p+1/q=1$.  We use the convention $q=\infty$ for $p=1$ and $q=1$ for $p=\infty$.

\begin{example}\label{ex:approx-lp}
	Let $1\leq p<\infty$, and let $q$ be the conjugate exponent of $p$.
	Suppose that $(\Omega,\Sigma,\mu)$ contains two disjoint measurable
	sets of positive finite measure. Then, for every $0\leq\eps<1$,
	\[
	J_I^\eps(L_p(\mu))
	=
	J_B^\eps(L_p(\mu))
	=
	J(L_p(\mu))
	=
	\max\{2^{1/p},2^{1/q}\}.
	\]
	The same formula holds for $\ell_p$ and for $\ell_p^n$, $n\geq2$.
	For $p=\infty$ and $n\geq2$,
	\[
	J_I^\eps(\ell_\infty^n)
	=
	J_B^\eps(\ell_\infty^n)
	=
	J(\ell_\infty^n)
	=
	2.
	\]
	
	Moreover, if $1<p<\infty$ and $\eps>0$, the value $J(L_p(\mu))$
	can be approached by pairs $u,v\in S_{L_p(\mu)}$ satisfying
	\[
	u\perp_{B,\eps}v
	\qquad\text{but}\qquad
	u\not\perp_B v.
	\]
	Thus the equality above is not merely a consequence of restricting
	the definition to exactly Birkhoff--James orthogonal pairs.
\end{example}

\begin{proof}
	The classical Clarkson--Hanner inequalities give
	$J(L_p(\mu))=\max\{2^{1/p},2^{1/q}\}$.
	Since $J_I^\eps(X)=J(X)$, it remains to consider the
	Birkhoff--James constant.
	
	Let $A,B\in\Sigma$ be disjoint measurable sets of positive finite measure and set
	$
	e_A=\mu(A)^{-1/p}\chi_A,
	e_B=\mu(B)^{-1/p}\chi_B.
	$
	Then $e_A,e_B\in S_{L_p(\mu)}$,
	$\|e_A\pm e_B\|_p=2^{1/p}$, and
	$\|e_A+\lambda e_B\|_p^p=1+|\lambda|^p\geq1$.
	Hence $e_A\perp_{B,\eps}e_B$ and
	$J_B^\eps(L_p(\mu))\geq2^{1/p}$.
	
	Next, put
	$
	x=2^{-1/p}(e_A+e_B),
	y=2^{-1/p}(e_A-e_B).
	$
	Then $x,y\in S_{L_p(\mu)}$, $\|x\pm y\|_p=2^{1/q}$, and
	\[
	\|x+\lambda y\|_p^p
	=
	\frac{|1+\lambda|^p+|1-\lambda|^p}{2}
	\geq1,
	\qquad \lambda\in\mathbb R,
	\]
	since the numerator is minimized at $\lambda=0$.
	Thus $x\perp_{B,\eps}y$, and therefore
	$
	J_B^\eps(L_p(\mu))
	\geq
	\max\{2^{1/p},2^{1/q}\}
	=
	J(L_p(\mu)).
	$
	The reverse inequality is immediate from the definition, so
	$J_B^\eps(L_p(\mu))=J(L_p(\mu))$.
	
	For $1<p<\infty$ and $\eps>0$, one may moreover choose genuinely
	approximate Birkhoff--James pairs. Let $(u,v)$ be one of the preceding
	extremal pairs with $u\perp_Bv$ and $m(u,v)=J(L_p(\mu))$, and define
	$
	v_t=\frac{v+tu}{\|v+tu\|_p}.
	$
	Then $v_t\to v$ and hence $d(u,v_t)\to d(u,v)=1$. Thus, for all
	sufficiently small $t>0$, $u\perp_{B,\eps}v_t$. Since $L_p(\mu)$ is
	smooth, if $f_u$ denotes the norming functional of $u$, then
	$f_u(v)=0$ whereas
	$
	f_u(v_t)=\frac{t}{\|v+tu\|_p}>0.
	$
	Hence $u\not\perp_Bv_t$ for $t>0$. By continuity,
	$m(u,v_t)\to J(L_p(\mu))$. Therefore, for every $\delta>0$, one may
	choose $t>0$ such that
	\[
	u\perp_{B,\eps}v_t,
	\qquad
	u\not\perp_Bv_t,
	\qquad
	m(u,v_t)>J(L_p(\mu))-\delta.
	\]
	
	The arguments for $\ell_p$ and $\ell_p^n$ are identical. For
	$\ell_\infty^n$, take
	$u=(1,1,0,\ldots,0)$ and $v=(1,-1,0,\ldots,0)$. Then
	$\|u\pm v\|_\infty=2$ and
	$
	\|u+\lambda v\|_\infty
	=
	\max\{|1+\lambda|,|1-\lambda|\}
	\geq1,
	$
	so $u\perp_{B,\eps}v$ and consequently
	$J_B^\eps(\ell_\infty^n)=2$.
\end{proof}

\begin{example}\label{ex:mixed-plane}
	Consider the real plane $M=(\mathbb{R}^2,N)$ endowed with the norm
	\[
	N(u,v)
	=
	\max\left\{\sqrt{u^2+v^2},|u-v|\right\}
	=
	\begin{cases}
		\sqrt{u^2+v^2}, & uv\geq0,\\
		|u|+|v|, & uv\leq0.
	\end{cases}
	\]
	Its unit sphere consists of two Euclidean quarter-circles and two line
	segments. This space was considered by Baronti and Papini
	\cite[Example~5.4]{BarontiPapini2022}, where it was shown that
	$J(M)=\sqrt{8/3}$ and the corresponding orthogonal James constant was
	estimated numerically.
	\[
	J_{\perp}(M)<J(M).
	\]
	We use it here to exhibit a genuinely nonconstant approximate
	Birkhoff--James profile.
\end{example}

Set
$
\theta_*=\arctan\frac{1}{\sqrt2},
$
and let $\theta_0\in(\theta_*,\pi/4)$ be the unique solution of
$
\sin\theta_0\bigl(\sin\theta_0+\cos\theta_0\bigr)=\cos\theta_0.
$
For $\theta\in[\theta_*,\theta_0]$, define
$
\sigma(\theta)
=
\frac{\pi}{4}
-
\arccos\left(\frac{\cot\theta}{\sqrt2}\right),
$
$
E(\theta)
=
1-
\frac{\sin(2\theta)}
{\sin(\sigma(\theta)+\theta)},
$
and
$
\eps_*
=
1-\frac{4\sqrt3}{3(1+\sqrt2)}.
$

The preceding example provides a concrete setting in which the
Birkhoff profile is genuinely nonconstant. We now compute this profile
explicitly, determine its exact saturation parameter, and describe its
behavior up to the saturation threshold.
\begin{example}\label{ex:mixed-plane-profile}
	For the space $M$ in Example~\ref{ex:mixed-plane}, the function
	$E$ is a continuous strictly decreasing bijection from
	$[\theta_*,\theta_0]$ onto $[0,\eps_*]$. Moreover,
	\[
	J_I^\eps(M)=J(M)=\sqrt{\frac83},
	\qquad 0\leq\eps<1,
	\]
	and
	\[
	J_B^\eps(M)
	=
	\begin{cases}
		2\cos(E^{-1}(\eps)),
		& 0\leq\eps\leq\eps_*,\\[1mm]
		\sqrt{8/3},
		& \eps_*\leq\eps<1.
	\end{cases}
	\]
	For $0\leq\eps\leq\eps_*$, an extremal pair is
	\[
	x=(\cos\alpha,\sin\alpha),
	\qquad
	y=(\cos\beta,\sin\beta),
	\]
	where
	\[
	\alpha=\sigma(\theta)-\theta,
	\qquad
	\beta=\sigma(\theta)+\theta,
	\qquad
	\theta=E^{-1}(\eps).
	\]
	Hence $J_B^\eps(M)$ is strictly increasing on $[0,\eps_*]$, and
	$\eps_*$ is its smallest saturation parameter. Finally,
	\[
	\lim_{\eps\uparrow\eps_*}
	\frac{\sqrt{8/3}-J_B^\eps(M)}
	{(\eps_*-\eps)^2}
	=
	\frac{\sqrt6(1+\sqrt2)^6}{48}.
	\]
\end{example}

\begin{proof}
	Let
	$
	A=\{(\cos t,\sin t):0\leq t\leq\pi/2\}
	$
	denote the circular part of $S_M$, and let
	$
	L=\{(-t,1-t):0\leq t\leq1\}.
	$
	By symmetry it is enough to consider pairs in $A\cup L$.
	
	Pairs involving $L$ do not produce the James extremum. A direct
	one-variable optimization gives
	$
	\min\{\norm{x+y},\norm{x-y}\}\leq 2\cos\theta_0,
	\text{whenever }x\in L\text{ or }y\in L.
	$
	The value $2\cos\theta_0$ is attained by a coordinate vector and a
	point of the circular arc.
	
	Now take
	$
	x=(\cos\alpha,\sin\alpha),
	y=(\cos\beta,\sin\beta),
	$
	with
	$
	0\leq\alpha\leq\beta\leq\frac{\pi}{2},
	\alpha+\beta\leq\frac{\pi}{2},
	$
	and put
	$
	\theta=\frac{\beta-\alpha}{2},
	\sigma=\frac{\alpha+\beta}{2}.
	$
	Then
	$
	\min\{\norm{x+y},\norm{x-y}\}
	=
	\min\left\{
	2\cos\theta,\,
	2\sin\theta(\sin\sigma+\cos\sigma)
	\right\}.
	$
	Since $\sin\sigma+\cos\sigma\leq\sqrt2$,
	$
	\min\{\norm{x+y},\norm{x-y}\}
	\leq
	\min\{2\cos\theta,2\sqrt2\sin\theta\}
	\leq
	\sqrt{\frac83}.
	$
	Equality is obtained for
	$
	\theta=\theta_*,
	\sigma=\frac{\pi}{4}.
	$
	Hence
	$
	J(M)=\sqrt{\frac83}.
	$
	The same extremal pair is exactly isosceles orthogonal and therefore
	is admissible for every $\eps$, so
	$
	J_I^\eps(M)=J(M)=\sqrt{\frac83}.
	$
	
	For the Birkhoff profile, the directed distance of the above pair is
	$
	d(x,y)
	=
	\frac{\sin(\beta-\alpha)}
	{\max\{\cos\beta,\sin\beta\}}.
	$
	Fix
	$
	r=2\cos t,
	t\in[\theta_*,\theta_0].
	$
	Among all two-arc pairs satisfying $\min\{\norm{x+y},\norm{x-y}\}\geq r$, the largest possible
	distance is attained at
	$
	\alpha=\sigma(t)-t,
	\beta=\sigma(t)+t,
	$
	where
	$
	\sin\sigma(t)+\cos\sigma(t)=\cot t.
	$
	For this pair,
	$
	\min\{\norm{x+y},\norm{x-y}\}=2\cos t
	$
	and
	$
	d(x,y)
	=
	\frac{\sin(2t)}
	{\sin(\sigma(t)+t)}
	=
	1-E(t).
	$
	Consequently, the level $2\cos t$ is admissible for $J_B^\eps(M)$ if
	and only if
	$
	E(t)\leq\eps.
	$
	
	From the definition of $\sigma$,
	$
	\sigma(\theta_*)=\frac{\pi}{4},
	\sigma(\theta_0)=\theta_0,
	$
	and differentiation shows that $E$ is strictly decreasing on
	$[\theta_*,\theta_0]$. Moreover,
	$
	E(\theta_0)=0,
	E(\theta_*)=\eps_*.
	$
	Hence $E$ is a continuous bijection from
	$[\theta_*,\theta_0]$ onto $[0,\eps_*]$, and therefore
	$
	J_B^\eps(M)
	=
	2\cos(E^{-1}(\eps)),
	0\leq\eps\leq\eps_*.
	$
	At $\eps=\eps_*$ the James-extremal pair becomes admissible, so
	$
	J_B^\eps(M)=\sqrt{\frac83},
	\eps_*\leq\eps<1.
	$
	
	Finally, set
	$
	u=\frac{\pi}{4}-\sigma(t).
	$
	Using
	$
	\cot t=\sqrt2\cos u,
	$
	Taylor expansion at $t=\theta_*$ gives
	$
	\sqrt{\frac83}-2\cos t
	=
	\frac{\sqrt6}{9}u^2+o(u^2)
	$
	and
	$
	\eps_*-E(t)
	=
	\frac{4\sqrt3}{3(1+\sqrt2)^3}\,u+o(u).
	$
	Taking the quotient yields
	$
	\lim_{\eps\uparrow\eps_*}
	\frac{\sqrt{8/3}-J_B^\eps(M)}
	{(\eps_*-\eps)^2}
	=
	\frac{\sqrt6(1+\sqrt2)^6}{48}.
	$
\end{proof}

Numerically,
$
J_{\perp}(M)=2\cos\theta_0\approx1.62562,
\eps_*\approx0.04341.
$

\begin{remark}
	For the mixed normed plane $M$, the approximate Birkhoff--James
	endpoint constant need not coincide with the James constant even for
	a positive approximation parameter. Indeed, since
	$\eps_*\approx0.04341475$, taking $\eps=0.02$ gives
	\[
	J_B^{0.02}(M)<J(M)=\sqrt{\frac83}.
	\]
	Thus equality with the James constant may fail before saturation.
\end{remark}

\begin{example}
	\label{ex:same-james-different-profiles}
	Let $M$ be the mixed normed plane in Example~\ref{ex:mixed-plane}, and set
	$
	p_0=\frac{2\log 2}{\log(8/3)},
	Y=\ell_{p_0}^{\,2}.
	$
	Then $1<p_0<2$ and
	$
	2^{1/p_0}=\sqrt{\frac{8}{3}}.
	$
	We show that
	$
	J(M)=J(Y)=\sqrt{\frac{8}{3}},
	$
	whereas
	$
	J_B^\varepsilon(M)<J_B^\varepsilon(Y)
	\text{for every }0\leq\varepsilon<\varepsilon_*,
	$
	with
	$
	\varepsilon_*
	=1-\frac{4\sqrt{3}}{3(1+\sqrt{2})}>0.
	$
	
	Indeed, Example~\ref{ex:mixed-plane-profile} gives
	$J(M)=\sqrt{\frac{8}{3}}$.
	The classical formula for the James constant of $\ell_p^2$ gives
	$
	J(Y)
	=\max\left\{2^{1/p_0},\,2^{1-1/p_0}\right\}
	=2^{1/p_0}
	=\sqrt{\frac{8}{3}},
	$
	where the second equality follows from $p_0<2$.
	
	To compute the Birkhoff profile of $Y$, take
	$u=(1,0)$ and $v=(0,1)$.
	For every $\lambda\in\mathbb R$,
	$
	\|u+\lambda v\|_{p_0}
	=\left(1+|\lambda|^{p_0}\right)^{1/p_0}
	\geq 1=\|u\|_{p_0}.
	$
	Thus $u\perp_B v$, so this pair is admissible for
	$J_B^\varepsilon(Y)$ for every $0\leq\varepsilon<1$.
	Since
	$
	\|u+v\|_{p_0}
	=\|u-v\|_{p_0}
	=2^{1/p_0},
	$
	we obtain
	$
	\sqrt{\frac{8}{3}}
	\leq J_B^\varepsilon(Y)
	\leq J(Y)
	=\sqrt{\frac{8}{3}}.
	$
	Consequently,
	$
	J_B^\varepsilon(Y)=\sqrt{\frac{8}{3}}(0\leq\varepsilon<1).
	$
	
	On the other hand, Example~\ref{ex:mixed-plane-profile} shows that
	$J_B^\varepsilon(M)$ is strictly increasing on
	$[0,\varepsilon_*]$ and first reaches $\sqrt{\frac{8}{3}}$ at
	$\varepsilon=\varepsilon_*$.
	Hence
	$
	J_B^\varepsilon(M)
	<\sqrt{\frac{8}{3}}
	=J_B^\varepsilon(Y)
	(0\leq\varepsilon<\varepsilon_*).
	$
	In particular, for each fixed
	$\varepsilon\in[0,\varepsilon_*)$, the value of
	$J_B^\varepsilon(X)$ cannot be determined by $J(X)$ alone.
	The smallest saturation parameter is $\varepsilon_*$ for $M$
	and zero for $Y$.
\end{example}

The preceding example shows that the James constant alone does not
determine the Birkhoff profile. The next example strengthens this
observation by keeping both endpoint constants and the saturation
parameter fixed.

\begin{example}
	\label{ex:same-two-endpoints}
	Let
	\[
	e=(1,0),\qquad a=\left(\frac{99}{100},\frac1{10}\right),
	\qquad b=\left(\frac45,\frac35\right),
	\qquad r=\left(1,\frac1{20}\right),
	\]
	and write $z^\tau=(z_2,z_1)$ for coordinate interchange. Define two
	centrally symmetric convex bodies by
	\[
	K_X=\operatorname{conv}\{\pm e,\pm a,\pm b,
	\pm b^\tau,\pm a^\tau,\pm e^\tau\},
	\qquad
	K_Y=\operatorname{conv}\bigl(K_X\cup\{\pm r,\pm r^\tau\}\bigr).
	\]
	Let $X$ and $Y$ be the real normed planes whose closed unit balls are
	$K_X$ and $K_Y$, respectively. 
	
	Equivalently, their norms are
	\[
	\|(s,t)\|_X=
	\max\left\{
	\left|s+\frac{t}{10}\right|,
	\frac{|250s+95t|}{257},
	\frac{5|s+t|}{7},
	\frac{|95s+250t|}{257},
	\left|\frac{s}{10}+t\right|,
	|s-t|
	\right\},
	\]
	and
	\[
	\|(s,t)\|_Y=
	\max\left\{
	|s|,
	\frac{|100s+20t|}{101},
	\frac{|250s+95t|}{257},
	\frac{5|s+t|}{7},
	\frac{|95s+250t|}{257},
	\frac{|20s+100t|}{101},
	|t|,
	|s-t|
	\right\}.
	\]
	
	Then
	$
	J(X)=J(Y)=\frac{514}{319},
	J_\perp(X)=J_\perp(Y)=\frac{331}{206}.
	$
	Nevertheless,
	$
	J_B^{1/40}(X)=\frac{331}{206}
	<\frac{373}{232}\leq J_B^{1/40}(Y).
	$
	Thus the two endpoint constants do not determine the profile.
	In fact, these two spaces also have the same saturation parameter,
	\[
	\sigma_B(X)=\sigma_B(Y)=\frac{100999}{3009900}.
	\]
\end{example}

\begin{figure}[htbp]
	\centering
	\includegraphics[width=0.45\textwidth]{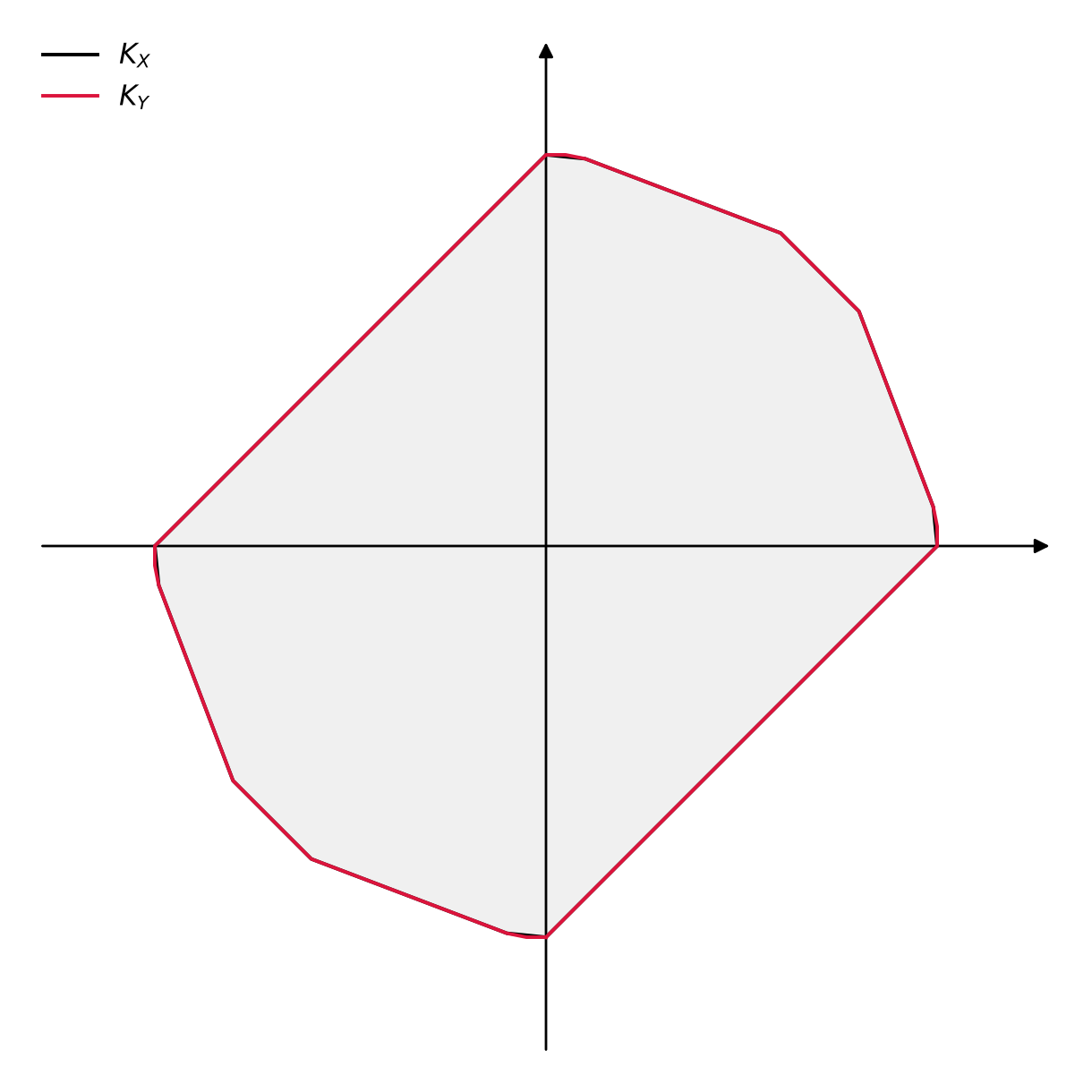}
	\caption{Overlay of the unit balls $K_X$ (black) and $K_Y$ (red, light-gray fill).
		The two bodies coincide everywhere except near the coordinate axes, where
		$K_Y$ bulges outward slightly through the added vertices $\pm r$ and $\pm r^\tau$.
		The inset zooms in on the positive horizontal axis: $K_X$ runs straight from
		$e=(1,0)$ to $a=(99/100,1/10)$, while $K_Y$ passes through $r=(1,1/20)$.}
	\label{fig:unit-balls-overlay}
\end{figure}

\begin{proof}
	For $Z=X,Y$, write
	\[
	m_Z(x,y)=\min\{\|x+y\|_Z,\|x-y\|_Z\},
	\qquad
	d_Z(x,y)=\operatorname{dist}_Z(x,\mathbb Ry).
	\]
	The supporting lines of the two unit balls give
	\begin{align*}
		\|(s,t)\|_X
		&=
		\max\left\{
		\left|s+\frac{t}{10}\right|,
		\frac{|250s+95t|}{257},
		\frac{5|s+t|}{7},
		\frac{|95s+250t|}{257},
		\left|\frac{s}{10}+t\right|,
		|s-t|
		\right\},\\
		\|(s,t)\|_Y
		&=
		\max\left\{
		|s|,
		\frac{|100s+20t|}{101},
		\frac{|250s+95t|}{257},
		\frac{5|s+t|}{7},
		\frac{|95s+250t|}{257},
		\frac{|20s+100t|}{101},
		|t|,
		|s-t|
		\right\}.
	\end{align*}
	
	Both norms are invariant under coordinate interchange and simultaneous
	change of signs. On a product of two polygonal sides, $m_Z$ is
	piecewise affine in the two side parameters. Hence its maximum is
	obtained after reducing to finitely many vertices and intersection
	points of the corresponding affine functions. A finite verification over the relevant vertices and intersection
	points of the affine pieces gives
	$
	J(X)=J(Y)=\frac{514}{319}.
	$
	Moreover, if
	$
	L=\{(-t,1-t):0\leq t\leq1\},
	$
	then the corresponding restricted maximization gives
	$
	\max_{x\in S_Z,\;y\in L} m_Z(x,y)
	=
	\frac{331}{206}, Z=X,Y.
	$
	Hence
	$
	m_Z(x,y)\leq
	c:=\frac{331}{206}
	$
	whenever one of the two vectors belongs to $L\cup(-L)$.
	
	At an interior first-quadrant point, every norming functional has
	nonnegative coordinates, so it cannot annihilate another positive
	vector. Hence, up to symmetry, every exact Birkhoff--James orthogonal
	pair has one vector in $L$. By the preceding estimate,
	$
	J_\perp(X),J_\perp(Y)\leq c.
	$
	For
	$
	x_0=e,
	y_0=
	\left(
	\frac{10846}{35535},
	\frac{64817}{71070}
	\right),
	$
	direct substitution gives, for $Z=X,Y$,
	$
	\|x_0\|_Z=\|y_0\|_Z=1,
	m_Z(x_0,y_0)=c,
	d_Z(x_0,y_0)=1.
	$
	Thus
	$
	J_\perp(X)=J_\perp(Y)=\frac{331}{206}.
	$
	
	It remains to compare the profiles at $\eps=1/40$. For $z$ on the
	first-quadrant part of $S_X$, write $q=z_1-z_2$. Then
	$
	z=
	\left(
	\frac{S(q)+q}{2},
	\frac{S(q)-q}{2}
	\right),
	$
	where
	\[
	S(q)=
	\begin{cases}
		7/5,
		&0\leq |q|\leq1/5,\\
		(514-155|q|)/345,
		&1/5\leq |q|\leq89/100,\\
		(20-9|q|)/11,
		&89/100\leq |q|\leq1.
	\end{cases}
	\]
	The same finite side calculation as above shows that if
	$m_X(x,y)>c$, then, after symmetry, the coordinate differences may be
	written as $u$ and $-v$ with
	\[
	u\geq v,\qquad
	u+v\geq c,\qquad
	S(u)+S(v)\geq\frac75c.
	\]
	For such a pair the larger directed distance is
	$
	D(u,v)
	=
	\frac{S(u)v+uS(v)}{S(v)+v}.
	$
	Checking the two linearity regions of $S$ shows that
	\[
	D(u,v)<\frac{39}{40}
	\]
	throughout the above feasible set. Hence
	\[
	m_X(x,y)>c
	\quad\Longrightarrow\quad
	d_X(x,y)<\frac{39}{40}.
	\]
	Therefore no pair with endpoint minimum greater than $c$ is
	$B$-$1/40$-orthogonal, and the pair $(x_0,y_0)$ yields
	$
	J_B^{1/40}(X)=\frac{331}{206}.
	$
	
	For $Y$, take
	$
	x=
	\left(\frac{199}{200},\frac3{40}\right),
	y=
	\left(\frac{143}{580},\frac{5419}{5800}\right).
	$
	Both vectors belong to $S_Y$, and direct substitution gives
	$
	\|x+y\|_Y=\|x-y\|_Y=\frac{373}{232},
	$
	together with
	$
	d_Y(x,y)
	=
	\frac{1056931}{1083800}
	>
	\frac{39}{40}.
	$
	Thus $x\perp_{B,1/40}y$, and hence
	$
	J_B^{1/40}(Y)
	\geq
	\frac{373}{232}
	>
	\frac{331}{206}
	=
	J_B^{1/40}(X).
	$
	
	Finally, we compute the saturation parameters. The same finite
	maximization shows that the James-extremal pairs in both spaces,
	up to the preceding symmetries, are exactly those satisfying
	$
	u+v=\frac{514}{319},
	\frac15\leq v\leq u\leq\frac{89}{100}.
	$
	On this set the larger directed distance is
	$
	\frac{AJ-2Buv}{A+(1-B)v},
	A=\frac{514}{345},
	B=\frac{31}{69},
	J=\frac{514}{319},
	$
	and is maximal at
	$
	u=\frac{89}{100},
	v=J-\frac{89}{100}.
	$
	For the corresponding extremal pair,
	$
	d_Z(x,y)
	=
	\frac{99}{100}-\frac{709}{30099}, Z=X,Y.
	$
	The finite-dimensional formula for the saturation parameter therefore
	gives
	\[
	\sigma_B(X)=\sigma_B(Y)
	=
	1-
	\left(
	\frac{99}{100}-\frac{709}{30099}
	\right)
	=
	\frac{100999}{3009900}.
	\]
\end{proof}

\section*{Acknowledgements}

Thanks to all the members of the Functional Analysis Research team of the College of Mathematics and Statistics of Anqing Normal University for their discussion and correction of the difficulties and errors encountered in this
paper. The authors cordially thank Professor Hai Zhang for his encouraging advice and the first author would like to acknowledge his strong support for undergraduate	students in carrying out basic mathematics research.

\section*{Declarations}

\subsection*{Competing Interests}
The authors declare that there are no competing interests.

\subsection*{Funding Information}
Not Applicable.

\subsection*{Author contribution}
These authors contributed equally to this work. All authors have read and agreed to the published version of the manuscript.

\subsection*{Data Availability Statement}
Not Applicable.

\end{document}